\documentclass{amsart}

\usepackage{graphicx}        
\usepackage[bottom]{footmisc}
\usepackage{url}
\usepackage[utf8]{inputenc}
\usepackage{amsmath}
\usepackage{amsthm}
\usepackage{amssymb}
\usepackage{physics}
\newtheorem{exmp}{Example}[section]
\newtheorem{assumption}{Assumption}

\newtheorem{theorem}{Theorem}
\newtheorem{lemma}[theorem]{Lemma}

\begin{document}

\title[Splitting approximations for SDDEs]{On splitting strategies for the numerical solution of stochastic delay differential equations with correlated noises}
\author[C. Kelly]{C\'onall Kelly}
\address{School of Mathematical Sciences, University College Cork, Western Gateway Building, Western Road, Cork, Ireland.}
\email{conall.kelly@ucc.ie}

\author[W. Tang]{Wenshi Tang}
\address{School of Mathematical Sciences, University College Cork, Western Gateway Building, Western Road, Cork, Ireland.}
\email{123121924@umail.ucc.ie}

%
%

\begin{abstract}
In this article we investigate the numerical solution of a scalar semilinear stochastic delay differential equation (SDDE) where the linear instantaneous feedback and nonlinear delayed feedback terms are perturbed by a pair of standard Brownian motions with correlation $\rho$. Such SDDEs may be naturally decomposed into two subsystems: a linear stochastic differential equation (SDE) without delay, and a nonlinear SDDE. 

Splitting methods work by solving each subsystem separately and composing the results over a single step. Our main theoretical result provides a bound on the mean-square error of a particular strategy for doing this, known as Lie-Trotter splitting. This bound implies that the method is mean-square strongly convergent with order $1/2$ when $\rho=0$, so that the noises are uncorrelated, but assurances of convergence are lost when $\rho\neq 0$. Indeed we develop an upper bound on the global mean-square error with a term that depends linearly on the magnitude of the correlation, and is independent of the stepsize. 

While our theoretical error bound is an estimate from above, we conduct numerical experiments that confirm the order of mean-square strong convergence of Lie-Trotter splitting in the $\rho=0$ case, and demonstrate a rapid fall-off to effectively zero as $|\rho|$ increases. Similar numerical results are observed for an alternative commonly used strategy known as Strang splitting. Nonetheless, by carefully reorganising the subsystems into which we split the SDDE, we can improve the range of values of $\rho$ over which a nonzero order of convergence is observed numerically.
\end{abstract}

\keywords{Stochastic delay differential equations, Lie--Trotter splitting, Strang splitting, Strong convergence}
\subjclass[2020]{34K50, 60H10, 60H35, 65C30}

\maketitle

\section{Introduction}

\subsection{General background: stochastic differential equations (SDEs)}
Differential equations are used to model systems that evolve dynamically according to one or more feedback mechanisms. Real-world systems such as financial markets are subject to uncertainty and it is necessary to incorporate the possibility of random effects disturbing the system. In this work, we model the random perturbations by Brownian motion processes, leading to an SDE of It\^o-type. A one-dimensional linear SDE perturbed by a single scalar standard Brownian motion $B$ may be written in the first instance as an integral equation 
\begin{equation}\label{eq:linSDEmotivate}
X(t)=X_0+\int_0^t \mu X(s)ds+\int_0^t \sigma X(s)dB(s),\quad t\in[0,T],
\end{equation}
for some constants $\mu,\sigma\in\mathbb{R}$ and initial value $X(0)=X_0\in\mathbb{R}$. The second integral on the RHS of \eqref{eq:linSDEmotivate} is taken with respect to the Brownian motion, and is known as an It\^o integral. Equation \eqref{eq:linSDEmotivate} is said to have stochastic differential form when it is written 
\begin{equation}\label{eq:dx}
dX(t)=\mu X(t)dt+\sigma X(t)dB(t),\quad t\in[0,T].
\end{equation}
The SDE \eqref{eq:dx} is widely used to model asset prices in financial markets, and is a rare example of an SDE that admits an explicit solution (see Section \ref{sec:exact}). In the real-world form of this model, $\mu$ represents the constant underlying growth rate of an asset and $\sigma$ its constant volatility. Nonetheless, most SDEs cannot be solved explicitly, and so it is necessary to develop effective and efficient numerical solution techniques. An excellent technical introduction to It\^o integrals and SDEs may be found in Mao~\cite{Mao2008}, with a more applied treatment available in~\cite[Chapter 10]{ISM}. 

The SDE \eqref{eq:dx} has solutions that are Markovian. In many applications it is desirable to model non-Markovian dynamics. In finance, we may be motivated to model time-varying asset-price volatility with historical dependencies as a stochastic process. Alternatively we may be motivated by the fact that, despite classical assumptions about market efficiency, there is evidence that past prices of an asset may sometimes influence its future evolution: see the discussion in~\cite{AChunxiang2018,Arriojas2007}, and \cite{MaoSabanis2013}. Applications of SDEs to the modelling of population dynamics naturally prompt us to incorporate the effects of gestation delay (see~\cite{BaharMao2004}). An SDE which, like \eqref{eq:dx}, incorporates only instantaneous feedback will not capture these properties. 

\subsection{Numerical solution of stochastic delay differential equations (SDDEs)}
Motivated by this discussion, let us extend \eqref{eq:dx} to include additional nonlinear feedback terms with delay in both the drift and diffusion coefficients. Consider the scalar semilinear SDDE of It\^o-type 
\begin{align}
dX(t)
&= \left[\mu X(t)+f(X(t-\tau))\right]dt + \sigma X(t)dW_1(t)+g(X(t-\tau))dW_2(t),\quad t\in(0,T], \nonumber\\
X(t)
&= \psi(t), \quad t\in[-\tau,0],\label{eq:SDDEcorr}
\end{align}
with delay length $\tau>0$, and linear coefficients $\mu,\sigma\in\mathbb{R}$. An SDDE of the form \eqref{eq:SDDEcorr} describes a system with two feedback channels, one of which is linear and instantaneous, the other nonlinear and subject to feedback delay of length $\tau$. Both channels are noisy, and the noise perturbations, though not identical on each channel, may be interdependent. Therefore we suppose that $W=[W_1,W_2]^T$ is a correlated pair of standard Brownian motions such that 
\[
W(t)\sim \mathcal{N}\left(0,\Sigma\right),\quad \Sigma=t\cdot\begin{pmatrix}1 & \rho\\ \rho & 1\end{pmatrix},\quad \rho\in(-1,1).
\]
Due to the presence of both noise and feedback delays, SDDE solutions can oscillate about an equilibrium point. We say that a nontrivial continuous function $y:[0,\infty)\to\mathbb{R}$ is oscillatory (about zero) if the set 
\(
Z_y=\{t\geq 0\,:\,y(t)=0\}
\)
satisfies $\sup Z_y=\infty$. This definition may be applied pathwise to solutions of \eqref{eq:SDDEcorr}, which are said to be almost surely (a.s.) oscillatory if and only if there exists an a.s. event on which all individual trajectories are oscillatory functions. The joint role of noise and feedback delays in oscillation of SDDE solutions is explored in \cite{ApplebyBuckwar2005,appleby2004asymptotic,appleby2004oscillation,appleby2006NonlinearOscillation}, building on the analysis of delay-induced oscillation in deterministic differential equations in Gopalsamy~\cite{G1992}. 

In this article, we are interested in the efficient numerical solution of solutions of \eqref{eq:SDDEcorr} via splitting. Splitting methods work by decomposing an equation into two or more subequations, each of which may be solved separately over a single discretisation step in a manner most suited to its structure, before being recombined. Readers interested in a general introduction to numerical solution of SDEs may refer to Higham \& Kloeden~\cite{HigKloe2021}. Those seeking an introductory reference focused only on numerical solution of SDDEs may consult Buckwar~\cite{Buckwar2000}.

By Cholesky factorisation of the covariance matrix $\Sigma$ and by the linear transformation property of the Gaussian distribution, it is equivalent to sample from the SDDE
\begin{multline}\label{eq:SDDEind}
dX(t)= \left[\mu X(t)+f(X(t-\tau))\right]dt 
\\+ \sigma X(t)\left[\sqrt{1-\rho^2}\,dB_1(t)+\rho\,dB_2(t)\right]+g(X(t-\tau))dB_2(t),\quad t\in(0,T],
\end{multline}
where $B=[B_1,B_2]^T$ is a pair of independent standard Brownian motions, and therefore $B(t)\sim \mathcal{N}(0,\mathbb{I}_2t)$.
We are motivated by the observation that terms on the RHS of \eqref{eq:SDDEcorr} (and \eqref{eq:SDDEind}) may be naturally divided into linear terms without a feedback delay, and nonlinear terms with a feedback delay. This induces a natural decomposition of the SDDE into two subsystems: a linear SDE without delay that can be solved exactly, and a nonlinear SDDE, which must be solved by some suitably tailored SDDE approximation method. Our results indicate that care must be taken when following such a strategy, since the correlation of $W_1$ and $W_2$ can have significant and potentially detrimental effects on the rate at which a splitting method converges in the strong stochastic sense to the true solution of the SDDE.

In this article we will examine two specific discretisation strategies of splitting type that are distinguished by the manner in which they recombine these subsystems. They are referred to as Lie-Trotter and Strang splitting respectively. In the case of Lie-Trotter, we find that when $W_1$ and $W_2$ are independent sources of noise, so that $\rho=0$, mean-square strong convergence can be confirmed theoretically as being of order $1/2$. However, when $\rho\neq 0$ our theoretical bound of the strong RMS error, though uniform in the stepsize $\delta t$, is no longer guaranteed to converge to zero with $\delta t$. We do not present a theoretical error analysis for the Strang method in this article. 

Numerically we explore the effect of varying $\rho$ on the empirical order of strong RMS convergence for both Lie-Trotter and Strang splitting. For both methods, our observations are consistent with order-$1/2$ strong convergence when $\rho=0$, and we observe a rapid drop-off to zero in the convergence order as $|\rho|$ increases. By modifying the split to ensure that individual independent Brownian motions are not replicated across subsystems, we can increase the effective range of $\rho$ over which we observe a nonzero order of convergence.

\subsection{Structure of the article.}
In Section \ref{sec:exact}, we demonstrate how a variation of constants form of the unique global solutions of \eqref{eq:SDDEind} may be constructed via the method-of-steps. This is useful for motivating our proposed splitting strategies, characterising the pathwise error in the proof of our main theorems, and providing a reference solution for our numerical investigation. 

In Section \ref{sec:split} we introduce the Lie-Trotter and Strang splitting strategies applied to the SDDE \eqref{eq:SDDEind}, and provide a rigorous definition of the strong convergence of a numerical method to the true solution of \eqref{eq:SDDEind} via a continuous-time interpolant of the scheme. In Section \ref{sec:moments} we derive moment bounds on important processes related to the SDDE \eqref{eq:SDDEind} and its discretisation. In Section \ref{sec:main} we present our main strong convergence result for the Lie-Trotter splitting method. 

In Section \ref{sec:numerics} we numerically verify the error bounds derived in Section \ref{sec:main} for test equations with oscillatory and nonoscillatory trajectories, and observe the relationship between the convergence bounds and the strength of the correlation coefficient $\rho$. To aid readability, a presentation of the proofs of our main results is deferred to Section \ref{sec:proofs}.

\section{Existence and uniqueness of SDDE solutions}\label{sec:exact}
Let $(\Omega,\mathcal{F},(\mathcal{F}_t)_{t\in[-\tau,T]},\mathbb{P})$ be a complete probability space equipped with filtration $(\mathcal{F}_t)_{t\in[-\tau,T]}$. In \eqref{eq:SDDEcorr} and \eqref{eq:SDDEind} respectively we suppose that $W$ and $B$ are $\mathcal{F}_t$-adapted and independent of $\mathcal{F}_0$.  We also suppose that the initial data $\psi$ is an $\mathcal{F}_0$-measurable random function, right continuous in $\mathbb{R}$ and $\mathbb{E}\left[\|\psi\|_\infty^p\right]<\infty$ for any $p\geq 2$, where $\mathbb{E}$ is the expectation with respect to $\mathbb{P}$. Here we define the norm $\|\psi\|_\infty:=\sup_{-\tau\leq t\leq 0}|\psi(t)|$.

Under the conditions of the following assumption, \eqref{eq:SDDEind} has a unique strong solution over the interval $[-\tau,T]$ for any $T<\infty$, adapted to the filtration $(\mathcal{F}_{t})_{t\in[-\tau,T]}$. 
\begin{assumption}\label{assum:LGcond}
Suppose that $f$  and each $g_i$ satisfy a linear growth condition: 
 there exists a positive constant $K_T$ such that for all $x\in\mathbb{R}$
\begin{equation}\label{eq:linear-growth}
\max\left\{|f(x)|^2,|g(x)|^2\right\}\leq K_T(1+|x|^2).
\end{equation}
\end{assumption}
Note that $f,g$ are pure delay functions and take no input from the present state $X(t)$. Therefore local Lipschitz conditions are not required to ensure existence and uniqueness; see for example \cite[Chapter 5.3]{Mao2008}. 

Proceed by the method of steps. First consider the inhomogeneous scalar linear SDE
\begin{equation}\label{eq:genSDE}
dx(t)=\left[\mu x(t)+f(t)\right]dt+\sum_{i=1}^{2}\left[b_ix(t)+g_i(t)\right]dB_i(t),\quad t\in[t_0,T],
\end{equation}
where the initial value $x(t_0)=x_0$ is an $\mathcal{F}_{t_0}$-measurable and $L^p(\Omega,\mathbb{R})$-valued random variable for some $p\geq 2$, where $f$ and each $g_i$ are Borel measurable and bounded $\mathbb{R}$-valued functions on $[t_0,T]$. The unique solution of \eqref{eq:genSDE} can be expressed for any $t\in[t_0,T]$ in variation of constants form as
\begin{equation}\label{eq:VoCform}
x(t)=\Phi_{t_0}(t)\left(x_0+\int_{t_0}^{t}\Phi_{t_0}^{-1}(s)\left[f(s)-\sum_{i=1}^{2}b_ig_i(s)\right]ds+\sum_{i=1}^{2}\int_{t_0}^{t}\Phi_{t_0}^{-1}(s)g_i(s)dB_i(s)\right),
\end{equation}
where $\Phi_{t_0}(t)$ is the fundamental solution of the corresponding homogeneous equation, given by
\begin{equation}\label{eq:phi}
\Phi_{t_0}(t)=\exp\left\{\left(\mu-\frac{1}{2}\sum_{i=1}^{2}b_i^2\right)(t-t_0)+\sum_{i=1}^{2}b_i (B_i(t)-B_i(t_0))\right\}.
\end{equation}
See, for example, Mao~\cite[Chapter 3]{Mao2008}. Note that $\Phi_s(t)$ defined in \eqref{eq:phi} admits the decomposition 
\begin{equation}\label{eq:PhiDecomp}
\Phi_s(t)=\Phi_s(u)\Phi_u(t),\quad 0\leq s\leq u\leq t\leq T,
\end{equation}
and moreover, $\Phi_s(t)$ is independent of $\mathcal{F}_s$. 

This gives us an avenue to construct a solution for \eqref{eq:SDDEind} successively over each interval $((j-1)\tau,j\tau]$, $j=0,\ldots,M$, where $M=T/\tau$ (suppose without loss of generality that $M\in\mathbb{N}$). Set 
\[
b_1=\sqrt{1-\rho^2}\sigma;\quad b_2=\rho\sigma;\quad g_1\equiv 0;\quad g_2\equiv g.
\]

Denoting on each interval $(j\tau,(j+1)\tau]$ the $\mathcal{F}_{j\tau}$-measurable process $\psi_j(t):=X(t-\tau)$, an application of the SDE~\eqref{eq:genSDE} over the interval $(j\tau,(j+1)\tau]$ with $t_0=j\tau$ gives, for $j=0,\ldots,M-1$,
\begin{multline}
X(t)=\Phi_{j\tau}(t)\left(X(j\tau)+\int_{j\tau}^{t}\Phi_{j\tau}^{-1}(s)\left[f(\psi_j(s))-\rho\,\sigma g(\psi_j(s))\right]ds\right.\\
\left.+\int_{j\tau}^{t}\Phi_{j\tau}^{-1}(s)g(\psi_j(s))dB_2(s)\right),\quad t\in(j\tau,(j+1)\tau],\label{eq:varConstGen}
\end{multline}
where, specifying \eqref{eq:phi} for the SDDE \eqref{eq:SDDEind},
\begin{equation*}
\Phi_{s}(t)=\exp\left\{\left(\mu-\frac{1}{2}\sigma^2\right)(t-s)+\sqrt{1-\rho^2}\,\sigma (B_1(t)-B_1(s))+\rho\,\sigma(B_2(t)-B_2(s))\right\}.
\end{equation*}
We can see that when $j=0$, $\psi_0(t)=\psi(t)$ corresponds to the $\mathcal{F}_0$-measurable initial data, allowing us to use \eqref{eq:varConstGen} to uniquely construct $\psi_1(t)$ over the first step $t\in(0,\tau]$. We can then construct stepwise $\psi_2(t)$ over $t\in(\tau,2\tau]$, $\psi_3(t)$ over $t\in(2\tau,3\tau]$, and so on, until the solution on the full interval $[-\tau,T]$ has been constructed in its entirety.

\section{Splitting Schemes for SDDEs}\label{sec:split}
In this section, we define the Lie-Trotter and Strang splitting schemes, and provide a precise characterisation of the strong convergence of a numerical approximation to the true solution of \eqref{eq:SDDEind}. Fix $N\in\mathbb{N}$ and let $\{0=t_0,t_1,\ldots,t_N=T\}$ be a uniform mesh over the interval $[0,T]$, so that each $t_n=n\delta t$, where $\delta t=T/N<\min\{\tau,1\}$. We allow this notation to support half-steps, whereby $t_{n+\frac{1}{2}}=(n+1/2)\delta t$. 

\subsection{Lie-Trotter and Strang splitting strategies}

Decompose \eqref{eq:SDDEind} into the following subequations:
\begin{equation}\label{eq:y1}
\begin{aligned}
dY^{[1]}(t)
&=
f\left(Y^{[1]}(t-\tau)\right)\,dt
+
g\left(Y^{[1]}(t-\tau)\right)\,dB_2(t),
\qquad t\in(0,T],\\
Y^{[1]}(t)
&=
Y^{[1]}_{[-\tau,0]}=\psi(t),
\qquad t\in[-\tau,0].
\end{aligned}
\end{equation}
and
\begin{equation}\label{eq:y2}
\begin{aligned}
dY^{[2]}(t) &= \mu Y^{[2]}(t)\,dt + \sqrt{1-\rho^2} \sigma Y^{[2]}(t)\,dB_1(t)+\rho\sigma Y^{[2]}(t)\,dB_2(t), \qquad t\in(0,T],\\
Y^{[2]}(0) &= Y^{[2]}_0 = 1.
\end{aligned}
\end{equation}

A splitting strategy proposes to solve each of subequations \eqref{eq:y1} and \eqref{eq:y2} separately and sequentially over each timestep, using the output of one as an initial condition for the other. 

The first subequation may be written in integral form as
\begin{multline}\label{eq:y1step}
Y^{[1]}\left((n+1)\delta t\right)
=
Y^{[1]}(n\delta t)
\\+
\int_{n\delta t}^{(n+1)\delta t}
f\left(Y^{[1]}(s-\tau)\right)\,ds
+
\int_{n\delta t}^{(n+1)\delta t}
g\left(Y^{[1]}(s-\tau)\right)\,dB_2(s),
\end{multline}
and this may be solved over each step by a suitable approximation of the integrals on the right-hand side. For example if we use the Euler-Maruyama method to solve \eqref{eq:y1step} we have 
\[
Y^{[1]}_{n+1}=\Psi^{[1]}(t_{n+1};t_n,Y^{[1]}_{n})
\]
where 
\[
\Psi^{[1]}(t;s,r,r_\tau)=r+f(r_\tau)|t-s|+g(r_\tau)(B_2(t)-B_2(s)),
\]
for $t_n\leq s<t\leq t_{n+1}$ and any $n=0,\ldots,N-1$. The second subequation may be solved exactly as
\begin{equation*}
Y^{[2]}_{n+1}=\Psi^{[2]}\left(t_{n+1};t_n,Y^{[1]}_{n+1}\right)
\end{equation*}
where
\begin{multline}\label{eq:Psi2}
\Psi^{[2]}(t;s,r)
\\=r\exp\left\{\left(\mu-\frac{1}{2}\sigma^2\right)|t-s|+\sqrt{1-\rho^2}\sigma(B_1(t)-B_1(s))+\rho\sigma (B_2(t)-B_2(s))\right\},
\end{multline}
again for $t_n\leq s<t\leq t_{n+1}$ and any $n=0,\ldots,N-1$.
Given the maps $\Psi^{[1]}$ and $\Psi^{[2]}$, we can now characterise various splitting strategies according to the manner in which these maps are composed. Two common examples are as follows.

\begin{enumerate}
\item The Lie--Trotter splitting scheme is given over each step of length $\delta t$ by the composition
\begin{equation}\label{eq:LTmap}
X_{n+1}=\Psi^{[2]}\left(t_{n+1};t_n,\Psi^{[1]}\left(t_{n+1};t_n,X_n,X_{n-\lfloor \tau/{\delta t}\rfloor}\right)\right).
\end{equation}

\item The Strang splitting scheme is characterised over each step of length $\delta t$ by the composition
\begin{equation}\label{eq:Strmap}
X_{n+1}=\Psi^{[1]}\left(t_{n+1};t_{n+\frac{1}{2}},\Psi^{[2]}\left(t_{n+1};t_n,\Psi^{[1]}\left(t_{n+\frac{1}{2}};t_n,X_n,X_{n-\lfloor \tau/{\delta t}\rfloor}\right)\right),X_{n-\lfloor \tau/{\delta t}\rfloor}\right).
\end{equation}
\end{enumerate}

\subsection{Continuous interpolants of splitting schemes}
The Lie-Trotter scheme \eqref{eq:LTmap}, where solution map $\Psi^{[1]}$ is an approximation and based upon the Euler-Maruyama method, may be written in stochastic difference equation form as
\begin{equation}\label{eq:LT-multi}
X_{n+1}=\Phi_{t_n}(t_{n+1})\left(
X_n+\delta t\, f\left(X_{n-\lfloor\tau/\delta t\rfloor}\right)+g\left(X_{n-\lfloor\tau/\delta t\rfloor}\right)\left(B_2(t_{n+1})-B_2(t_{n})\right)\right),
\end{equation}
for $n=0,\ldots,N-1$. This is appropriate for implementation in code. However, for our strong convergence analysis we will work with a continuous-time interpolant of \eqref{eq:LT-multi}, given over the step $[t_n,t_{n+1}]$ by
\begin{equation}\label{eq:lieinter}
\begin{aligned}
Y(t)&=\Phi_{t_n}(t)\left(Y(t_n)+\int_{t_n}^{t}f\left(Y\left(t_{n-\lfloor\tau/\delta t\rfloor}\right)\right)\,ds
+\int_{t_n}^{t}g\left(Y\left(t_{n-\lfloor\tau/\delta t\rfloor}\right)\right)\,dB_2(s)\right).
\end{aligned}
\end{equation}
Notice that the discrete and continuous variants of the scheme coincide at the meshpoints, so that $Y(t_n)=X_n$ for all $n=0,\ldots,N$.

The Strang scheme can be written as
\begin{align}
X_{n+1}=&\left(X_n + \frac{\delta t}{2}f\left(X_{n-\lfloor\tau/\delta t\rfloor}\right)+g\left(X_{n-\lfloor\tau/\delta t\rfloor}\right)\,\left(B_2\left(t_{n+\frac{1}{2}}\right)-B_2(t_n)\right)\right)\Phi_{t_n}(t_{n+1})\nonumber
\\&+\frac{\delta t}{2}f\left(X_{n-\lfloor\tau/\delta t\rfloor}\right)+g\left(X_{n-\lfloor\tau/\delta t\rfloor}\right)\,\left(B_2\left(t_{n+1}\right)-B_2\left(t_{n+\frac{1}{2}}\right)\right),\label{eq:Strang-discrete}
\end{align}
for $n=0,\ldots,N-1$. We do not present the continuous interpolant of \eqref{eq:Strang-discrete}, since the Strang scheme is investigated only numerically in this article, and \eqref{eq:Strang-discrete} is sufficient for this. Nonetheless, we will define in the next subsection the notion of strong convergence of a numerical method in terms of its continuous interpolant.

\subsection{Strong convergence of splitting schemes}

Suppose that $Y$ is the solution of the continuous interpolant of either a Lie-Trotter or a Strang splitting as described in the previous subsection, and let $X$ be the solution of \eqref{eq:SDDEind}. We say that $Y$ converges strongly to $X$ in $p^{th}$-moment with order $\gamma$ if and only if there exists $C>0$ independent of $\delta t$ such that
\begin{equation}\label{eq:strongconver}
\max_{t\in[0,T]}\mathbb{E}\left[|X(t)-Y(t)|^p\right]\leq C\delta t^{\gamma p}.
\end{equation}
If $p=2$ we say that $Y$ converges to $X$ in mean-square with order $\gamma$, and we are concerned with this case here.

\section{Moment bounds on key processes}\label{sec:moments}
\subsection{Moment bounds on the fundamental solution}
In our analysis we will require the following bounds which are special cases of moment bounds applying to matrix-valued variants of $\Phi_s(t)$ provided in Erdogan \& Lord~\cite{ErdoganLord2022}. In what follows, let $L_p(\Omega,\mathbb{R})$ denote the space of scalar real-valued random variables $U$ on the probability space $(\Omega,\mathcal{F},\mathbb{P})$ that satisfy $\mathbb{E}\left[|U|^p\right]<\infty$.
\begin{lemma}\label{lem:2.1}
Let $0\leq s<t\leq T$, $0<t-s<1$, $p\ge 2$ and consider the process $\Phi_s(t)$ given in \eqref{eq:phi}. For each $\mathcal{F}_s$-measurable random variable $U\in L_p(\Omega,\mathbb{R})$, there exists a constant $C_p$ such that
\begin{equation}
\mathbb{E}\left[\,|U-\Phi_s(t)U|^p\,\right] \le C_p\,|t-s|^{p/2}.\label{eq:Phi1bound}
\end{equation}
\end{lemma}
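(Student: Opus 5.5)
The plan is to separate the random variable $U$ from the increment of the fundamental solution using independence, and then bound the $p$th moment of $1-\Phi_s(t)$ directly. We have $U-\Phi_s(t)U=U\,(1-\Phi_s(t))$. Here $U$ is $\mathcal{F}_s$-measurable, and $\Phi_s(t)$ is independent of $\mathcal{F}_s$, as noted after \eqref{eq:PhiDecomp}. Hence
\[
\mathbb{E}\left[|U-\Phi_s(t)U|^p\right]=\mathbb{E}\left[|U|^p\right]\,\mathbb{E}\left[|1-\Phi_s(t)|^p\right].
\]
It therefore suffices to show that $\mathbb{E}\left[|1-\Phi_s(t)|^p\right]\le \tilde C_p |t-s|^{p/2}$, with $\tilde C_p$ depending only on $p$, $\mu$, $b_1$, $b_2$ and $T$. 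We then set $C_p=\tilde C_p\,\mathbb{E}[|U|^p]$. The constant in the statement must be allowed to depend on $\mathbb{E}[|U|^p]$; this is finite since $U\in L_p(\Omega,\mathbb{R})$.

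For the factor $1-\Phi_s(t)$, I would use the fact that $r\mapsto\Phi_s(r)$, $r\in[s,t]$, solves the homogeneous linear SDE with $\Phi_s(s)=1$. In integral form,
\[
\Phi_s(t)-1=\mu\int_s^t\Phi_s(r)\,dr+\sum_{i=1}^2 b_i\int_s^t\Phi_s(r)\,dB_i(r).
\]
By the elementary inequality $|a_1+a_2+a_3|^p\le 3^{p-1}\sum_k|a_k|^p$, it is enough to bound each term separately; write $h=t-s$.

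For the drift term, Hölder's inequality gives $\mathbb{E}\left|\int_s^t\Phi_s(r)\,dr\right|^p\le h^{p-1}\int_s^t\mathbb{E}[\Phi_s(r)^p]\,dr$. For each stochastic integral, the Burkholder--Davis--Gundy inequality followed by Hölder on the quadratic variation gives
\[
\mathbb{E}\left|\int_s^t\Phi_s(r)\,dB_i(r)\right|^p\le c_p\,h^{p/2-1}\int_s^t\mathbb{E}[\Phi_s(r)^p]\,dr.
\]

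The remaining ingredient is a uniform bound on $\mathbb{E}[\Phi_s(r)^p]$. Since the exponent in \eqref{eq:phi} is Gaussian, this is computed explicitly:
\[
\mathbb{E}[\Phi_s(r)^p]=\exp\left\{\left(p\mu-\tfrac{p}{2}(b_1^2+b_2^2)+\tfrac{p^2}{2}(b_1^2+b_2^2)\right)(r-s)\right\}.
\]
Because $r-s\le h<1$, this is bounded by a constant $K_p$ independent of $s$, $r$ and $h$. The drift term is then $O(h^p)$ and the stochastic terms are $O(h^{p/2})$. Since $h<1$, we have $h^p\le h^{p/2}$, and the claimed bound follows.

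There is no serious obstacle in this argument. The only points needing care are the use of the independence of $\Phi_s(t)$ from $\mathcal{F}_s$ to factor the expectation, and making explicit that $C_p$ absorbs $\mathbb{E}[|U|^p]$. The Gaussian moment computation is what makes everything uniform in $s$ and $t$.
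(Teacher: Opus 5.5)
Your argument is correct, but it does not follow the paper, because the paper gives no proof of this lemma. It states the bound as a special case of moment estimates for matrix-valued fundamental solutions in Erdogan \& Lord~\cite{ErdoganLord2022} and defers to that reference. Your route is a self-contained scalar proof in four steps:
\begin{enumerate}
\item Independence of $\Phi_s(t)$ from $\mathcal{F}_s$ gives the exact factorisation $\mathbb{E}[|U|^p]\,\mathbb{E}[|1-\Phi_s(t)|^p]$.
\item You write $\Phi_s(t)-1$ through the homogeneous linear SDE.
\item You control the drift integral by H\"older and the It\^o integrals by Burkholder--Davis--Gundy.
\item You close with the explicit lognormal moment $\mathbb{E}[\Phi_s(r)^p]=\exp\{(p\mu+\tfrac{p(p-1)}{2}(b_1^2+b_2^2))(r-s)\}$, which is uniformly bounded because $r-s<1$.
\end{enumerate}

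What your version buys is transparency. It makes explicit that $C_p$ is $\mathbb{E}[|U|^p]$ times a factor depending only on $p,\mu,b_1,b_2$. In fact that factor does not depend on $T$, so you need not list it. The paper leaves this dependence implicit, yet it is exactly how the lemma is used later: with $U=Y(t_n)$ in Lemma~\ref{lem:4.4}, and with $U=1$ for the terms $\bar A_2$ and $\bar B_2$ in the proof of Theorem~\ref{thm:main}.

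What the cited route buys is generality. It applies to matrix-valued fundamental solutions, where with non-commuting coefficients there is in general no closed-form exponential to exploit. In the scalar setting of this paper that generality is not needed, and your elementary argument suffices.
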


\begin{lemma}
\label{lem:2.2}
Suppose $p\ge 2$, $0\leq s<t\leq T$ and consider the process $\Phi_s(t)$ given in \eqref{eq:phi}. Then
\begin{enumerate}
\item for any $\mathcal{F}_s$-measurable random variable $V\in L_p(\Omega,\mathbb{R})$,
\begin{equation}\label{eq:Phi-bound-v}
\mathbb{E}\left[|\Phi_s(t)V|^{p}\right]
\;\le\;
\exp\left[\left(p|\mu|+\frac{p(p-1)\sigma^2}{2}\right)(t-s)\right]
\,\mathbb{E}\left[|V|^{p}\right].
\end{equation}

\item for any $\mathcal{F}_t$-measurable random variable $U\in L_p(\Omega,\mathbb{R})$,
\begin{equation}\label{eq:Phi-bound-u}
\mathbb{E}\left[|\Phi_s(t)U|^{p}\right]\;\le\; K_r^{p/r}\,\mathbb{E}\left[|U|^{q}\right]^{p/q},
\end{equation}
where $K_{r}=\mathbb{E}\left[|\Phi_s(t)|^{r}\right]<\infty$ for $1/p=1/r+1/q$ with $r,q>1$.
\end{enumerate}
\end{lemma}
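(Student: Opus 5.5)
The plan is to treat the two parts of Lemma~\ref{lem:2.2} separately. Part~(1) uses the independence of $\Phi_s(t)$ from $\mathcal{F}_s$ together with the explicit log-normal form \eqref{eq:phi}. Part~(2), where $U$ may depend on the same Brownian increments as $\Phi_s(t)$, is a direct application of H\"older's inequality.

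For part~(1), since $V$ is $\mathcal{F}_s$-measurable and $\Phi_s(t)$ is independent of $\mathcal{F}_s$, we have
\[
\mathbb{E}\left[|\Phi_s(t)V|^p\right]=\mathbb{E}\left[|\Phi_s(t)|^p\right]\mathbb{E}\left[|V|^p\right].
\]
It then suffices to compute $\mathbb{E}[|\Phi_s(t)|^p]$ exactly. In \eqref{eq:phi} the exponent is Gaussian. Its mean is $(\mu-\tfrac12\sum_i b_i^2)(t-s)$ and its variance is $\sum_i b_i^2 (t-s)$, because $B_1,B_2$ are independent. With $b_1=\sqrt{1-\rho^2}\sigma$ and $b_2=\rho\sigma$ we get $\sum_i b_i^2=\sigma^2$. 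The moment generating function of a normal variable then gives
\[
\mathbb{E}\left[\Phi_s(t)^p\right]=\exp\left[\left(p\mu-\tfrac{p}{2}\sigma^2+\tfrac{p^2}{2}\sigma^2\right)(t-s)\right]=\exp\left[\left(p\mu+\tfrac{p(p-1)}{2}\sigma^2\right)(t-s)\right].
\]
Bounding $p\mu\le p|\mu|$ yields \eqref{eq:Phi-bound-v}. Because $\Phi_s(t)>0$, no absolute values cause any trouble.

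For part~(2), independence is no longer available, since $U$ is only $\mathcal{F}_t$-measurable. Apply the generalised H\"older inequality with exponents $r/p$ and $q/p$, which are conjugate because $p/r+p/q=1$. This gives
\[
\mathbb{E}\left[|\Phi_s(t)|^p|U|^p\right]\le \mathbb{E}\left[|\Phi_s(t)|^{r}\right]^{p/r}\mathbb{E}\left[|U|^{q}\right]^{p/q}=K_r^{p/r}\,\mathbb{E}\left[|U|^q\right]^{p/q}.
\]
Finiteness of $K_r$ follows from the same Gaussian computation as in part~(1) with $p$ replaced by $r$: $K_r=\exp[(r\mu+\tfrac{r(r-1)}{2}\sigma^2)(t-s)]$, which is bounded uniformly for $t-s\le T$. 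The inequality is only meaningful when $\mathbb{E}[|U|^q]<\infty$, which is implicit in applying it; I would state this explicitly.

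Neither step is a real obstacle. The only point needing care is the variance computation in part~(1): the cross terms vanish precisely because $B_1$ and $B_2$ are independent in \eqref{eq:SDDEind}, so the total volatility is exactly $\sigma^2$ and the bound does not depend on $\rho$.
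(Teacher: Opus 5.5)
Your proof is correct, but it is not how the paper handles this lemma. The paper gives no argument of its own; it quotes both bounds as scalar special cases of moment estimates for matrix-valued fundamental solutions in Erdogan \& Lord~\cite{ErdoganLord2022}. In that matrix setting $\Phi_s(t)$ generally has no closed form, since the coefficient matrices need not commute. An estimate like \eqref{eq:Phi-bound-v} is then obtained by applying It\^o's formula to $|\Phi_s(t)V|^p$, bounding the drift through norms, and closing with Gronwall, which is presumably why the exponent carries $p|\mu|$ rather than $p\mu$.

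You instead use the explicit log-normal form \eqref{eq:phi}, the independence of $\Phi_s(t)$ from $\mathcal{F}_s$, and the Gaussian moment generating function. This gives the exact identity $\mathbb{E}[\Phi_s(t)^p]=\exp[(p\mu+\tfrac{p(p-1)}{2}\sigma^2)(t-s)]$. Your route is more elementary, self-contained and slightly sharper. It also shows clearly why the bound does not depend on $\rho$: $b_1^2+b_2^2=\sigma^2$ and $B_1,B_2$ are independent. The cost is that it relies on the scalar (commuting) structure and would not carry over to the systems setting the cited result covers.

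Part~(2) is the same H\"older argument in either approach. Your remark that the right-hand side is finite only when $U\in L_q$ with $q>p$, and not merely $U\in L_p$ as the statement says, is a valid sharpening of the hypothesis.
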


\subsection{Moment properties of the Lie-Trotter scheme}
For our main convergence analysis, we require the following mean-square bound on the continuous interpolant of the Lie-Trotter splitting given by the SDDE \eqref{eq:lieinter}:
\begin{lemma}\label{lem:2.3new}
Let $Y$ be the continuous interpolant of the Lie-Trotter splitting scheme given by the SDDE \eqref{eq:lieinter}, and assume that the linear growth condition \eqref{eq:linear-growth} in Assumption \ref{assum:LGcond} holds on $f$ and $g$. Then, for any $p\geq 2$ there exists a constant $C>0$ such that for all $t\in[0,T]$
\begin{equation}\label{eq:lemma23new}
\mathbb{E}\left[\sup_{s\in[-\tau,t]}|Y(s)|^p\right]\leq C.    
\end{equation}
\end{lemma}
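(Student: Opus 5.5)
The plan is to exploit the fact that the interpolant \eqref{eq:lieinter} is, on each step $[t_n,t_{n+1}]$, a geometric Brownian motion factor $\Phi_{t_n}(t)$ multiplying a process whose increments are driven only by \emph{frozen} delayed values $Y(t_{n-\lfloor\tau/\delta t\rfloor})$. Since $\delta t<\tau$, the delayed index satisfies $t_{n-\lfloor\tau/\delta t\rfloor}\le t_n$, so the frozen value is $\mathcal{F}_{t_n}$-measurable (and equals $\psi$ at a nonpositive time whenever the index is negative). I would first unroll the recursion using the cocycle property \eqref{eq:PhiDecomp} to obtain a global variation-of-constants representation. Writing $\lfloor s\rfloor_\delta$ for the mesh point at or below $s$ and $\hat Y(s):=Y(t_{n-\lfloor\tau/\delta t\rfloor})$ for $s\in[t_n,t_{n+1})$, this gives for $t\in[0,T]$
\[
Y(t)=\Phi_0(t)\psi(0)+\int_0^t \Phi_{\lfloor s\rfloor_\delta}(t)f(\hat Y(s))\,ds+\int_0^t\Phi_{\lfloor s\rfloor_\delta}(t)g(\hat Y(s))\,dB_2(s).
\]

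The difficulty with this form is that $\Phi_{\lfloor s\rfloor_\delta}(t)$ is not adapted at time $s$, so the stochastic integral is not a genuine It\^o integral. To avoid this, I would instead work directly with the SDE satisfied by $Y$ on each step. By It\^o's product rule applied to \eqref{eq:lieinter}, on $[t_n,t_{n+1}]$ we have
\[
dY(t)=\big[\mu Y(t)+f(\hat Y(t))-\rho\sigma g(\hat Y(t))\,\Phi_{t_n}(t)\big]dt+\sigma Y(t)\,d\widetilde W(t)+\Phi_{t_n}(t)g(\hat Y(t))\,dB_2(t),
\]
where $\widetilde W=\sqrt{1-\rho^2}B_1+\rho B_2$. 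Here I need to be careful about the exact placement of the $\Phi$ factors: the drift coefficient is $\Phi_{t_n}(t)f$ and the noise coefficient is $\Phi_{t_n}(t)g$, plus the cross-variation term. All coefficients are adapted, and they are linearly bounded by $|Y(t)|$ and $|\Phi_{t_n}(t)|\,(1+|\hat Y(t)|)$.

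The estimate then proceeds in three steps. First, I would apply the inequality $|a+b+c+d|^p\le 4^{p-1}(\cdots)$, Hölder's inequality for the $dt$ terms, and the Burkholder--Davis--Gundy inequality for the $dB$ terms to bound $\mathbb{E}\sup_{r\le t}|Y(r)|^p$. Second, for the terms involving $\Phi_{t_n}(s)\,h(\hat Y(s))$, where $\hat Y(s)$ is $\mathcal{F}_{t_n}$-measurable and $\Phi_{t_n}(s)$ is independent of $\mathcal{F}_{t_n}$, I would apply Lemma~\ref{lem:2.2}(1) with $V=h(\hat Y(s))$. Since $s-t_n<1$, this gives $\mathbb{E}|\Phi_{t_n}(s)V|^p\le e^{p|\mu|+p(p-1)\sigma^2/2}\,\mathbb{E}|V|^p$, a bound uniform in $n$ and $\delta t$. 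Third, combining this with the linear growth condition \eqref{eq:linear-growth} yields
\[
\mathbb{E}\Big[\sup_{r\in[-\tau,t]}|Y(r)|^p\Big]\le C_1\Big(1+\mathbb{E}\|\psi\|_\infty^p\Big)+C_2\int_0^t \mathbb{E}\Big[\sup_{r\in[-\tau,s]}|Y(r)|^p\Big]\,ds,
\]
with constants depending only on $p$, $T$, $K_T$, $\mu$, $\sigma$, $\rho$. Gronwall's inequality then gives \eqref{eq:lemma23new}.

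The main obstacle is making the Gronwall step rigorous, since a priori finiteness of the left side is needed. I would first establish finiteness step by step by induction on $n$, using Lemma~\ref{lem:2.2} and the fact that each step involves only finitely many Gaussian/lognormal moments. Alternatively, I would localise with the stopping times $\tau_R=\inf\{t:|Y(t)|>R\}$ and pass to the limit $R\to\infty$ via Fatou's lemma. A secondary care point is handling the supremum over the factor $\Phi$ inside a step. This is avoided by the SDE formulation above, in which $\sup$ only falls on $Y$ and on integrals controlled by BDG.
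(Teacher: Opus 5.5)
Your argument is correct, but it takes a different route from the paper.

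\textbf{The paper's route.} It writes $Y=Z_1Z_2$ on each step, applies the product rule and then It\^o's formula with $V(y)=|y|^p$. It then iterates back to the initial data, takes suprema, and closes with BDG, Lemma~\ref{lem:2.2} and Gronwall. For brevity it carries this out only under the stronger hypothesis that $f,g$ are globally Lipschitz with $f(0)=g(0)=0$. The genuine linear-growth case is deferred to the transformation $V(y)=(1+|y|^2)^{p/2}$.

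\textbf{Your route and what it buys.} You estimate $\mathbb{E}\sup|Y|^p$ directly from the SDE for the interpolant, which is linear in $Y$, using the elementary inequality, H\"older and BDG.
\begin{itemize}
\item The factor $\Phi_{t_n}(s)$ only ever multiplies the frozen, $\mathcal{F}_{t_n}$-measurable value $h(\hat Y(s))$. So Lemma~\ref{lem:2.2}(1) applies verbatim by independence, with a constant uniform in $n$ and $\delta t$ because $s-t_n<1$.
\item Linear growth is handled directly, without an auxiliary Lyapunov function.
\item By contrast, in the It\^o-on-$|Y|^p$ route $\Phi_{t_n}(s)$ multiplies $|Y(s)|^{p-1}$ and $|Y(s)|^{p-2}$. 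These are not independent of $\Phi_{t_n}(s)$ on the current step, since $Y(s)=\Phi_{t_n}(s)Z_2(s)$. One must therefore first decouple via Young's inequality, e.g.\ $|Y|^{p-1}|\hat Y|\Phi\le \tfrac{p-1}{p}|Y|^p+\tfrac1p|\hat Y|^p\Phi^p$, before independence can be used. The paper's sketch glosses over this when it pulls $\sup|Y|^p$ out with $\Phi$ still attached.
\item You explicitly secure the a priori finiteness that Gronwall needs, by step-by-step induction or localisation. The paper leaves this implicit.
\end{itemize}
The advantage of the It\^o-formula route is that it can exploit sign structure in the drift, as in the error analysis of Theorem~\ref{thm:main}. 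Nothing of that kind is needed here.

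\textbf{A small correction.} Your displayed SDE should have drift $\mu Y(t)+\Phi_{t_n}(t)f(\hat Y(t))+\rho\sigma\Phi_{t_n}(t)g(\hat Y(t))$. You fix the $\Phi$ placement in words, but the cross-variation term enters with a plus sign. This is immaterial to your bounds, since only absolute values are used.
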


Next, we state a result on the path regularity of the scheme.
\begin{lemma}\label{lem:4.4}
Let $Y$ be the continuous interpolant of the Lie-Trotter splitting scheme given by the SDDE \eqref{eq:lieinter}, and assume that the linear growth condition \eqref{eq:linear-growth} in Assumption \ref{assum:LGcond} holds on $f$ and $g$.  Suppose $p\ge 1$ and fix a step $[t_n,t_{n+1}]$. For all $t_n\le s<t\le t_{n+1}$, there is a constant $C>0$ such that
\begin{equation*}
 \mathbb{E}\left[|Y(t)-Y(s)|^{p}\right] \;\le\; C|t-s|^{p/2}.   
\end{equation*}
\end{lemma}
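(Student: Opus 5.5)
We prove Lemma \ref{lem:4.4} by writing the increment of the interpolant \eqref{eq:lieinter} within a single step using the cocycle property \eqref{eq:PhiDecomp}. Abbreviate $Z_n:=Y(t_{n-\lfloor\tau/\delta t\rfloor})$. For $t_n\le s<t\le t_{n+1}$ we have $\Phi_{t_n}(t)=\Phi_{t_n}(s)\Phi_s(t)$, and \eqref{eq:lieinter} evaluated at $s$ gives
\[
Y(t)=\Phi_s(t)\Bigl(Y(s)+f(Z_n)(t-s)+g(Z_n)\bigl(B_2(t)-B_2(s)\bigr)\Bigr).
\]
Hence
\[
Y(t)-Y(s)=\bigl(\Phi_s(t)Y(s)-Y(s)\bigr)+\Phi_s(t)f(Z_n)(t-s)+\Phi_s(t)g(Z_n)\bigl(B_2(t)-B_2(s)\bigr).
\]
It is enough to prove the bound for $p\ge 2$. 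The case $1\le p<2$ then follows from Jensen's (Lyapunov's) inequality: $\mathbb{E}|\cdot|^p\le(\mathbb{E}|\cdot|^2)^{p/2}$. We bound $\mathbb{E}|Y(t)-Y(s)|^p$ by $3^{p-1}$ times the sum of the $p$-th moments of the three terms.

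\textbf{First term.} $Y(s)$ is $\mathcal{F}_s$-measurable, and it lies in $L_p$ for every $p$ by Lemma \ref{lem:2.3new}. Also $t-s\le\delta t<1$. So Lemma \ref{lem:2.1} applies directly and gives a bound $C_p|t-s|^{p/2}$. The constant in Lemma \ref{lem:2.1} implicitly depends on $\mathbb{E}|U|^p$, or on a higher moment of $U$. This dependence is harmless, because Lemma \ref{lem:2.3new} bounds all such moments of $Y$ uniformly in $s$ and in $\delta t$.

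\textbf{Second term.} $Z_n$ is $\mathcal{F}_{t_n}\subset\mathcal{F}_s$-measurable, so $f(Z_n)$ is $\mathcal{F}_s$-measurable. We apply part (1) of Lemma \ref{lem:2.2} with $V=f(Z_n)(t-s)$. This gives
\[
\mathbb{E}\bigl[|\Phi_s(t)V|^p\bigr]\le e^{(p|\mu|+p(p-1)\sigma^2/2)}\,|t-s|^p\,\mathbb{E}|f(Z_n)|^p.
\]
By \eqref{eq:linear-growth}, $\mathbb{E}|f(Z_n)|^p\le C(1+\mathbb{E}|Z_n|^p)$, which Lemma \ref{lem:2.3new} bounds. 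Since $|t-s|^p\le|t-s|^{p/2}$, this term is $O(|t-s|^{p/2})$.

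\textbf{Third term (the main obstacle).} Here $\Phi_s(t)$ and $B_2(t)-B_2(s)$ are both built from the same increment on $[s,t]$, so they are dependent. The product is therefore not $\mathcal{F}_s$-measurable, and part (1) of Lemma \ref{lem:2.2} cannot be used. Instead we use part (2) with $U=g(Z_n)(B_2(t)-B_2(s))$, which is $\mathcal{F}_t$-measurable. We choose exponents with $1/p=1/r+1/q$, for example $r=q=2p$. Then
\[
\mathbb{E}\bigl[|\Phi_s(t)U|^p\bigr]\le K_{2p}^{1/2}\,\bigl(\mathbb{E}|U|^{2p}\bigr)^{1/2}.
\]
We need $K_{2p}=\mathbb{E}|\Phi_s(t)|^{2p}$ to be bounded uniformly in $s,t$. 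This holds because $\Phi_s(t)$ is lognormal, so $K_{2p}$ is an explicit exponential in $(t-s)\le 1$.

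Next, $g(Z_n)$ is $\mathcal{F}_s$-measurable and the increment $B_2(t)-B_2(s)$ is independent of $\mathcal{F}_s$. So the moment factorises:
\[
\mathbb{E}|U|^{2p}=\mathbb{E}|g(Z_n)|^{2p}\,\mathbb{E}|B_2(t)-B_2(s)|^{2p}=\mathbb{E}|g(Z_n)|^{2p}\,C_{2p}|t-s|^{p}.
\]
The factor $\mathbb{E}|g(Z_n)|^{2p}$ is bounded via \eqref{eq:linear-growth} and Lemma \ref{lem:2.3new}, applied with $2p$ in place of $p$. Taking the square root gives $C|t-s|^{p/2}$. Summing the three contributions gives the claim, with $C$ independent of $n$, $s$, $t$ and $\delta t$.
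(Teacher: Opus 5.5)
Your starting identity is wrong. Evaluating \eqref{eq:lieinter} at $s$ gives $Y(t_n)+f(Z_n)(s-t_n)+g(Z_n)\bigl(B_2(s)-B_2(t_n)\bigr)=\Phi_{t_n}(s)^{-1}Y(s)$, not $Y(s)$. The cocycle property therefore yields
\[
Y(t)=\Phi_s(t)Y(s)+\Phi_{t_n}(t)\Bigl(f(Z_n)(t-s)+g(Z_n)\bigl(B_2(t)-B_2(s)\bigr)\Bigr).
\]
The interpolant does not restart at $s$. The increments on $[s,t]$ are added before multiplication by the fundamental solution anchored at $t_n$, so they carry the factor $\Phi_{t_n}(t)=\Phi_{t_n}(s)\Phi_s(t)$, not $\Phi_s(t)$. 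A quick check: take $\sigma=0$, $g\equiv 0$, $f\equiv c\neq0$, $\mu\neq0$ and $s>t_n$. Then your formula is off by $c(t-s)\bigl(e^{\mu(t-t_n)}-e^{\mu(t-s)}\bigr)$.

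The repair is routine.
\begin{enumerate}
\item Your first term, $(\Phi_s(t)-1)Y(s)$, is unaffected.
\item In the second term, apply part (1) of Lemma \ref{lem:2.2} on $[t_n,t]$ with the $\mathcal{F}_{t_n}$-measurable $V=f(Z_n)(t-s)$.
\item In the third term, use H\"older with $K_{2p}=\mathbb{E}|\Phi_{t_n}(t)|^{2p}$. This is bounded uniformly because $t-t_n\le\delta t<1$. Your factorisation of $\mathbb{E}|U|^{2p}$ still holds, since $Z_n$ is $\mathcal{F}_s$-measurable and $B_2(t)-B_2(s)$ is independent of $\mathcal{F}_s$.
\end{enumerate}

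With that correction, your argument is a streamlined version of the paper's. The paper uses the same cocycle split but expands $Y(s)$ into its three constituents, giving five terms $\tilde A$--$\tilde E$. The three random terms whose moments are $\tilde A,\tilde B,\tilde C$ sum exactly to your $(\Phi_s(t)-1)Y(s)$. The terms behind $\tilde D,\tilde E$ are your corrected second and third terms.

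Keeping $Y(s)$ intact lets one application of Lemma \ref{lem:2.1} replace the separate Jensen, Burkholder--Davis--Gundy and Cauchy--Schwarz estimates the paper needs for $\tilde B$ and $\tilde C$. That application uses the independence of $\Phi_s(t)$ from $\mathcal{F}_s$ and the uniform moments from Lemma \ref{lem:2.3new}.

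Your preliminary reduction of $1\le p<2$ to $p\ge 2$ via Lyapunov's inequality is also worthwhile. Lemmas \ref{lem:2.1} and \ref{lem:2.2} are stated only for $p\ge2$, while the lemma claims $p\ge1$, and the paper's proof does not address this gap.
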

The proofs of Lemmas \ref{lem:2.3new} and \ref{lem:4.4} are deferred to Section \ref{sec:proofs}.

\section{Main theoretical result}\label{sec:main}

In this section we will provide a bound on the strong error of the Lie-Trotter splitting method, where the nonlinear subsystem with delay has been discretised using the Euler-Maruyama method. This bound confirms order-$1/2$ strong convergence in mean-square when $\rho=0$, and provides a uniform nonzero bound in the case where $\rho\neq 0$. The proof relies upon the following  Gronwall inequality for delay-type equations.

\begin{lemma}[Delay Gronwall Inequality~\cite{GyorHorvath2016}]\label{le:delaygrownwall}
Let $t_0\in\mathbb{R}$, $t_0<T\le\infty$, and $c\ge 0$. Let $a,b:[t_0,T]\to\mathbb{R}_+$ be locally integrable. 
Assume $r\ge 0$ and $\tau:[t_0,T]\to\mathbb{R}_+$ is such that
\[
t_0-r \le t-\tau(t),\qquad t_0\le t<T.
\]
Set $b(t)=0$ for $t_0-r\le t<t_0$.
If a Borel measurable, locally bounded function $x:[t_0-r,T]\to\mathbb{R}_+$ satisfies
\begin{equation*}
x(t)\le c + \int_{t_0}^{t} b(u)\,x(u)\,du + \int_{t_0}^{t} a(u) x\left(u-\tau(u)\right)du,
\qquad t_0\le t<T,
\end{equation*}
then
\begin{equation*}
x(t)\le K \exp\left(
\int_{t_0}^{t} b(s)\,ds
+\int_{t_0}^{t} a(s)\exp\left(-\int_{s-\tau(s)}^{s} b(u)\,du\right)\,ds
\right),\qquad t_0\le t<T,
\end{equation*}
where
\[
K := \max\left( c, \sup_{\,t_0-r \le s \le t_0} x(s) \right).
\]
\end{lemma}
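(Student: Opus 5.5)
The plan is a comparison argument. I will show that the right-hand side of the claimed bound is a supersolution of the integral inequality, and then propagate the inequality forward in time by a first-crossing argument. For $\varepsilon>0$ set $K_\varepsilon:=K+\varepsilon$ and
\[
z_\varepsilon(t):=K_\varepsilon\exp\left(\int_{t_0}^{t} b(s)\,ds+\int_{t_0}^{t} a(s)\exp\left(-\int_{s-\tau(s)}^{s} b(u)\,du\right)ds\right),\qquad t_0\le t<T,
\]
and extend it by $z_\varepsilon(t):=K_\varepsilon$ for $t_0-r\le t<t_0$. Since $a,b$ are locally integrable and the inner exponential factor lies in $(0,1]$, $z_\varepsilon$ is locally absolutely continuous on $[t_0,T)$. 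It is nondecreasing and satisfies, for a.e.\ $t$,
\[
z_\varepsilon'(t)=b(t)z_\varepsilon(t)+a(t)\,z_\varepsilon(t)\exp\left(-\int_{t-\tau(t)}^{t}b(u)\,du\right).
\]

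\textbf{Step 1 (the key delay comparison).} I claim that
\[
z_\varepsilon(t-\tau(t))\le z_\varepsilon(t)\exp\left(-\int_{t-\tau(t)}^{t}b(u)\,du\right).
\]
There are two cases.
\begin{itemize}
\item If $t-\tau(t)\ge t_0$, then $z_\varepsilon(t)/z_\varepsilon(t-\tau(t))$ equals $\exp\left(\int_{t-\tau(t)}^t b\right)$ multiplied by the exponential of a nonnegative integral. So the ratio is at least $\exp\left(\int_{t-\tau(t)}^t b\right)$, which is the claim.
\item If $t-\tau(t)<t_0$, then $z_\varepsilon(t-\tau(t))=K_\varepsilon$. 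The convention $b=0$ on $[t_0-r,t_0)$ gives $\int_{t-\tau(t)}^t b=\int_{t_0}^t b$, and $z_\varepsilon(t)\exp\left(-\int_{t_0}^t b\right)\ge K_\varepsilon$, which again gives the claim.
\end{itemize}
Consequently
\[
z_\varepsilon'(t)\ge b(t)z_\varepsilon(t)+a(t)z_\varepsilon(t-\tau(t)),
\]
so $z_\varepsilon$ is a supersolution. This is the step I expect to need the most care, because the weight $\exp\left(-\int_{s-\tau(s)}^s b\right)$ is exactly what makes the bound close, and the history portion must be handled through the convention on $b$.

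\textbf{Step 2 (first crossing).} Define $R(t):=c+\int_{t_0}^t b\,x+\int_{t_0}^t a(u)\,x(u-\tau(u))\,du$. Local boundedness of $x$ and local integrability of $a,b$ make $R$ continuous, and by hypothesis $x\le R$ on $[t_0,T)$. At the initial time, $R(t_0)=c<K_\varepsilon=z_\varepsilon(t_0)$.

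Suppose $R<z_\varepsilon$ failed somewhere, and let $t^*>t_0$ be the first time with $R(t^*)=z_\varepsilon(t^*)$. For $u\in[t_0,t^*)$ we then have $x(u)\le R(u)<z_\varepsilon(u)$. On the history interval, $x\le K<K_\varepsilon=z_\varepsilon$. Hence $x(u-\tau(u))\le z_\varepsilon(u-\tau(u))$ for all $u\in[t_0,t^*]$, using $u-\tau(u)\ge t_0-r$.

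Since $a,b\ge0$, Step 1 gives
\[
R(t^*)\le c+\int_{t_0}^{t^*}\left[b\,z_\varepsilon+a\,z_\varepsilon(\cdot-\tau(\cdot))\right]du\le c+z_\varepsilon(t^*)-K_\varepsilon<z_\varepsilon(t^*).
\]
This contradicts the definition of $t^*$. Therefore $x\le R<z_\varepsilon$ on $[t_0,T)$.

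\textbf{Step 3 (limit).} Letting $\varepsilon\downarrow0$ gives $x(t)\le z_0(t)$, which is the stated bound. The case $T=\infty$ needs nothing extra, since every step is local in time.
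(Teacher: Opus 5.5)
The paper gives no proof of this lemma. It is imported from Gy\H{o}ri and Horv\'ath and used only as a black box at the end of the proof of Theorem~\ref{thm:main}, so there is no in-paper argument to set yours against. Judged on its own, your proof is correct and complete.

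\begin{itemize}
\item Both cases of Step~1 check out. In the history case, the convention $b=0$ on $[t_0-r,t_0)$ is exactly what turns $\int_{t-\tau(t)}^t b$ into $\int_{t_0}^t b$.
\item The first-crossing step closes because $c\le K<K_\varepsilon$ gives both $R(t_0)<z_\varepsilon(t_0)$ and $R(t^*)\le c+z_\varepsilon(t^*)-K_\varepsilon<z_\varepsilon(t^*)$.
\item The $\varepsilon$-shift also handles the degenerate case $K=0$, where the conclusion is $x\equiv0$.
\end{itemize}

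The one thing you use silently is that $\tau$ is (Lebesgue) measurable. You need it for $s\mapsto a(s)\exp(-\int_{s-\tau(s)}^s b(u)\,du)$ to be integrable, hence for $z_\varepsilon$ to be locally absolutely continuous. You also need it for $u\mapsto z_\varepsilon(u-\tau(u))$ to be measurable in Step~2. The paper's restatement omits this hypothesis, but it is implicit: without it neither the assumed inequality nor the conclusion is well defined.

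For comparison, the other standard route is an integrating-factor reduction rather than a guessed supersolution.
\begin{itemize}
\item Set $y(t)=K+\int_{t_0}^t b\,x+\int_{t_0}^t a(u)\,x(u-\tau(u))\,du$ on $[t_0,T)$ and $y=K$ on the history. Then $x\le y$ and $y'\le b\,y+a\,y(\cdot-\tau(\cdot))$.
\item Put $v(t):=y(t)\exp(-\int_{t_0}^t b)$. It satisfies $v'(t)\le a(t)\exp(-\int_{t-\tau(t)}^t b)\,v(t-\tau(t))$, so the weight appears automatically from the change of variables.
\item Bounding $v(t-\tau(t))$ by the running maximum of $v$ reduces everything to the classical Gronwall inequality.
\end{itemize}
Your argument avoids the running maximum and the appeal to classical Gronwall. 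The cost is that you must know the answer in advance and verify in Step~1 that the weight makes it close. Both routes hinge on the same convention $b=0$ on the initial interval.
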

We also require a technical condition on the drift coefficient of the SDDE governed by \eqref{eq:SDDEind} beyond what is required for the existence and uniqueness of solutions.
\begin{assumption}\label{ass:delayOSL}
Let $f,g$ be the nonlinear coefficients of \eqref{eq:SDDEind}. Assume  that for all $x,y\in\mathbb{R}$ there exists a positive constant $C>0$ with
\begin{equation}\label{eq:4.5}
\langle x-y,\; f(x-\tau)-f(y-\tau)\rangle \le C\,|x-y|^{2},    
\end{equation}
and
\begin{equation}\label{eq:gLipschitz}
    |g(x)-g(y)|\leq C|x-y|.
\end{equation}
\end{assumption}
Note that our analysis also works if condition \eqref{eq:4.5} in Assumption \ref{ass:delayOSL} is replaced with a global Lipschitz bound on $f$. Moreover, global Lipschitz bounds on $f$ and $g$ are sufficient to ensure the linear growth bounds captured in \eqref{eq:linear-growth} in Assumption \ref{assum:LGcond}.

We now present our main convergence result.
\begin{theorem}\label{thm:main}
Let $X$ be a solution of \eqref{eq:SDDEind}, and let $Y$ be the continuous interpolant of the Lie-Trotter splitting method for \eqref{eq:SDDEind}, given by \eqref{eq:lieinter}. 
Then, for all $\rho\in(-1,1)$ there exist positive constants $K_1,K_2>0$, independent of $\delta t$ and $\rho$, such that
\begin{equation}\label{eq:mainBound}
    \max_{t\in[0,T]}\mathbb{E}\left[|X(t)-Y(t)|^2\right]\leq K_1\delta t+K_2|\rho|.
\end{equation}
In the special case that $\rho=0$, the Lie-Trotter splitting method is strongly mean-square-convergent with order $1/2$.
\end{theorem}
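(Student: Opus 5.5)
We compare the variation-of-constants form \eqref{eq:varConstGen} of $X$ with the interpolant \eqref{eq:lieinter} on a single step $[t_n,t]$, $t\in[t_n,t_{n+1}]$, and then chain the steps together with \eqref{eq:PhiDecomp}. Unrolling \eqref{eq:lieinter} back to $t=0$ gives
\[
Y(t)=\Phi_0(t)\psi(0)+\int_0^t \Phi_{\lfloor s\rfloor}(t) f(Y(\underline{s}-\tau))\,ds+\int_0^t\Phi_{\lfloor s\rfloor}(t)g(Y(\underline{s}-\tau))\,dB_2(s),
\]
where $\lfloor s\rfloor$ denotes the last meshpoint at or before $s$ and $\underline{s}-\tau$ the associated delayed meshpoint $t_{n-\lfloor\tau/\delta t\rfloor}$. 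In the same way, $X(t)=\Phi_0(t)\psi(0)+\int_0^t\Phi_s(t)[f(X(s-\tau))-\rho\sigma g(X(s-\tau))]\,ds+\int_0^t\Phi_s(t)g(X(s-\tau))\,dB_2(s)$. Subtracting, I would split $E(t):=X(t)-Y(t)$ into four groups of terms.
\begin{enumerate}
\item The \emph{correction term} $-\rho\sigma\int_0^t\Phi_s(t)g(X(s-\tau))\,ds$, which the scheme omits.
\item \emph{Freezing errors}, in which $\Phi_s(t)$ is replaced by $\Phi_{\lfloor s\rfloor}(t)$.
\item \emph{Delay-discretisation errors}, in which $X(s-\tau)$ is compared with $Y(s-\tau)$ and then with $Y(\underline{s}-\tau)$.
\item The \emph{Lipschitz/one-sided terms} $f(X(s-\tau))-f(Y(s-\tau))$ and $g(X(s-\tau))-g(Y(s-\tau))$.
\end{enumerate}

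The main estimates run as follows. For the correction term, Cauchy--Schwarz, Lemma~\ref{lem:2.2}(2), the linear growth bound \eqref{eq:linear-growth} and the moment bounds on $X$ give a bound $C\rho^2\le C|\rho|$, since $|\rho|<1$. This is the source of the $K_2|\rho|$ term, and it vanishes when $\rho=0$. For the freezing errors, $\Phi_s(t)-\Phi_{\lfloor s\rfloor}(t)=\Phi_s(t)(1-\Phi_{\lfloor s\rfloor}(s))$. Lemma~\ref{lem:2.1} together with Hölder's inequality and Lemma~\ref{lem:2.2} bounds this in mean square by $C\delta t$.

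In the stochastic integral, however, $\Phi_s(t)$ is not adapted. Before applying the Itô isometry I would factor it as $\Phi_s(t)=\Phi_0(t)\Phi_0(s)^{-1}$, or equivalently work stepwise. The cleanest route is to write the error recursion directly on one step,
\[
E(t)=\Phi_{t_n}(t)\Bigl(E(t_n)+\int_{t_n}^t[\cdots]\,ds+\int_{t_n}^t[\cdots]\,dB_2\Bigr),
\]
and use the independence of $\Phi_{t_n}(t)$ from $\mathcal F_{t_n}$, via \eqref{eq:Phi-bound-v}. Inside the bracket, $X$'s own representation contributes factors $\Phi_{t_n}^{-1}(s)$. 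These I would compare with $1$ using Lemma~\ref{lem:2.1}, again producing $O(\delta t)$ per unit time.

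For the delay-discretisation errors, Lemma~\ref{lem:4.4} with the Lipschitz bound \eqref{eq:gLipschitz} gives $\mathbb E|Y(s-\tau)-Y(\underline{s}-\tau)|^2\le C\delta t$. If the one-sided condition \eqref{eq:4.5} is the only assumption on $f$, the $f$-difference requires an Itô-formula argument on $|E|^2$. In that case I would instead apply Itô's formula to $|E(t)|^2$ on each step, using \eqref{eq:4.5} for the cross term $\langle E,f(X(\cdot-\tau))-f(Y(\cdot-\tau))\rangle$ and the bound $2ab\le a^2+b^2$ elsewhere.

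Collecting these estimates, $x(t):=\sup_{r\le t}\mathbb E|E(r)|^2$ should satisfy
\[
x(t)\le C(\delta t+|\rho|)+C\int_0^t x(u)\,du+C\int_0^t x(u-\tau)\,du.
\]
Since $E\equiv0$ on $[-\tau,0]$, the delay Gronwall inequality, Lemma~\ref{le:delaygrownwall}, applies with $K=C(\delta t+|\rho|)$ and yields \eqref{eq:mainBound}. The constants come from Lemmas~\ref{lem:2.2}--\ref{lem:4.4} and the moment bounds on $X$ obtained by the analogous method-of-steps argument. These depend on $\sigma,\mu,T$ and the constants $K_T,C$ of the assumptions, and enter through $\rho^2$ only via bounded factors, so they can be taken uniform in $\rho\in(-1,1)$.

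I expect the main obstacle to be the bookkeeping of the non-adapted factors $\Phi_s(t)$ inside stochastic integrals, together with combining Itô's formula with the multiplicative propagator across steps. The way to handle both is to rely consistently on the stepwise factorisation \eqref{eq:PhiDecomp} and on the independence of $\Phi_{t_n}(t)$ from $\mathcal F_{t_n}$, so that \eqref{eq:Phi-bound-v} yields the factor $e^{C\delta t}$ per step, whose product over all steps stays bounded.
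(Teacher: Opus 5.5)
Your operative argument (It\^o's formula for $E(t)^2$ on each step $[t_n,t_{n+1}]$ from $E(t)=\Phi_{t_n}(t)(E(t_n)+\cdots)$, with \eqref{eq:4.5} for the $f$ cross term and Young's inequality, Lemma~\ref{lem:2.1}, Lemma~\ref{lem:4.4} and \eqref{eq:gLipschitz} elsewhere, then summation over steps and Lemma~\ref{le:delaygrownwall} with $E\equiv 0$ on $[-\tau,0]$) is essentially the paper's proof, and the global four-group decomposition is a detour the paper does not need. One caution: \eqref{eq:Phi-bound-v} controls only $\Phi_{t_n}(t)E(t_n)$, not $\Phi_{t_n}(t)$ times the in-step integrals, which share the increments of $B_2$ (and, through $X$'s representation, also of $B_1$) with $\Phi_{t_n}(t)$, so the ``$e^{C\delta t}$ per step'' propagation cannot close the recursion by itself; the paper instead writes $E=Z_1Z_2$ and applies the product rule, after which (using Young's inequality) $\Phi_{t_n}(s)$ multiplies only $\mathcal F_{t_n}$-measurable factors, and the $\rho$-dependence enters through the cross term $-2\rho\sigma\,\Phi_{t_n}(s)\,g(Y(t_{n-\lfloor\tau/\delta t\rfloor}))\,E(s)$, bounded by $|\rho|\,C(\mathbb{E}[E(s)^2]+1)$, rather than through a separately estimated $O(\rho^2)$ correction.
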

The proof of Theorem \ref{thm:main} relies directly upon Lemmas \ref{lem:2.1}--\ref{lem:4.4}, and \ref{le:delaygrownwall}. A detailed presentation is deferred to Section \ref{sec:proofs}. 

Notice from \eqref{eq:mainBound} in the statement of Theorem \ref{thm:main} that if $\rho\neq 0$ the global strong error is bounded above uniformly in $\delta t$, and we can neither confirm nor rule out strong convergence. We investigate this numerically in the next section.

\section{Numerical Experiments}\label{sec:numerics}
In this section, we present numerical experiments to explore the theoretical result provided by Theorem \ref{thm:main} in Section~\ref{sec:main}. We will do this for both the Lie-Trotter and Strang splitting schemes for two example SDDEs.

\subsection{Numerical estimation of the strong convergence order $\gamma$.}
Motivated by the definition of
strong convergence in \eqref{eq:strongconver}, we assume that the root-mean-square error satisfies for some $C>0$
\begin{equation}\label{eq:numErrorRate}
\mathcal{E}_{\text{RMS}}:=\sqrt{\max_{k=0,\ldots,N}\mathbb{E}\left[|X(t_k)-Y(t_k)|^2\right]}
\approx C (\delta t)^{\gamma}.
\end{equation}
We then take logarithms on both sides of \eqref{eq:numErrorRate}:
\[
\log \mathcal{E}_{\text{RMS}}\approx \log(C)+\gamma\log(\delta t).
\]
In the numerical experiments, the expectation in $\mathcal{E}_{\text{RMS}}$ is approximated simulating the pathwise error between the reference solution and numerical approximation over a sample of $M'=500$ simulated trajectories. As reference solution, we use the variation of constants form of the exact solution of the SDDE given by~\eqref{eq:VoCform}, where over a very fine mesh of size $2^{-18}$, the integral for the drift has been approximated using the trapezoid rule, and the integral for the drift has been approximated using an explicit rectangular approximation. The value of $\gamma$ is then estimated as the slope of the regression line of $\log(\mathcal{E}_{\text{RMS}})$ on $\log(\delta t)$.

\subsection{Investigating the effect of changing $\rho$ on numerical convergence order.}
We will numerically investigate the strong convergence order of the Lie-Trotter and Stang splitting schemes given by \eqref{eq:LT-multi} and \eqref{eq:Strang-discrete} respectively. 

\begin{exmp}\label{ex:one} 
First consider consider the SDDE \eqref{eq:SDDEind} with
\[
f(x) = -x, \qquad g(x) = x,
\]
and parameters
\[
\mu = 0,\qquad \sigma = 1.2,\qquad \tau = 1,\qquad T = 8,\qquad \rho \in [-1,1].
\]
The initial condition is $X(t)=1$ for $t\in[-\tau,0]$.
The reference solution is computed with time step $2^{-18}$, whereas the numerical
schemes are tested with
\begin{equation}\label{eq:stepsizes}
\delta t = \{\,2^{-10},\, 2^{-11},\, 2^{-12},\, 2^{-13},\, 2^{-14}\}.
\end{equation}

Figure~\ref{F:1} illustrates the observed numerical order of strong convergence with respect to the correlation parameter $\rho$ for both the Lie--Trotter and Strang splitting schemes. We see that the numerical order of strong convergence is consistent with $\gamma=1/2$ when $\rho=0$, and declines rapidly but smoothly to zero as the size of $|\rho|$ increases. This result is consistent with the statement of Theorem \ref{thm:main} in the case of the Lie-Trotter scheme, and shows that the Strang scheme behaves similarly for the same numerical example.
\end{exmp}

\begin{figure}
\begin{center}
 \includegraphics[width=0.9\textwidth]{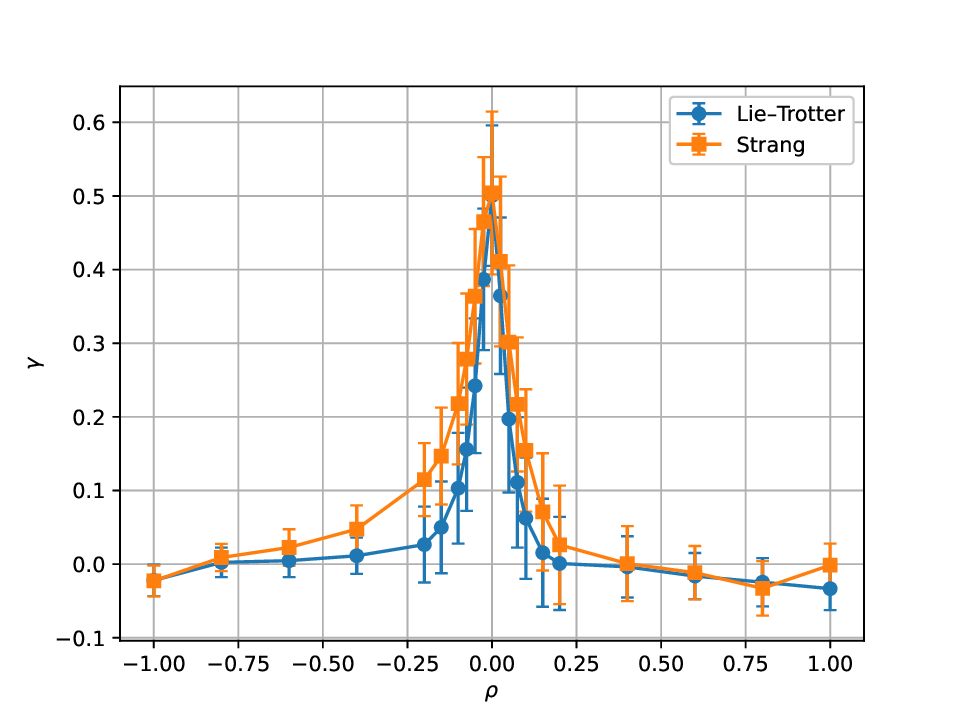}
\end{center}
\caption{Strong convergence order $\gamma$ against $\rho$ (Example~\ref{ex:one}).}\label{F:1}
\end{figure}

\begin{exmp}\label{ex:two} 
For comparison, we consider the case of \eqref{eq:SDDEind} with
\[
f(x) = 0.6x, \qquad g(x) = x,
\]
and
\[
\mu = -0.4,\qquad \sigma = 1.2,\qquad \tau = 1,\qquad T = 8,\qquad \rho \in [-1,1].
\]
We use the same initial condition, reference time step and $\delta t$ values as in Example \ref{ex:one}.

Figure~\ref{F:2} presents a plot of numerical order of strong convergence against $\rho$. In particular, when $\rho = 0$, the numerical convergence order is close to $\gamma=1/2$, in agreement with the theoretical results established in Subsection~\ref{sec:main}, and we observe that the numerical order of strong convergences drops to zero as $|\rho|$ increases.
\end{exmp}

\begin{figure}
\begin{center}
\includegraphics[width=0.9\textwidth]{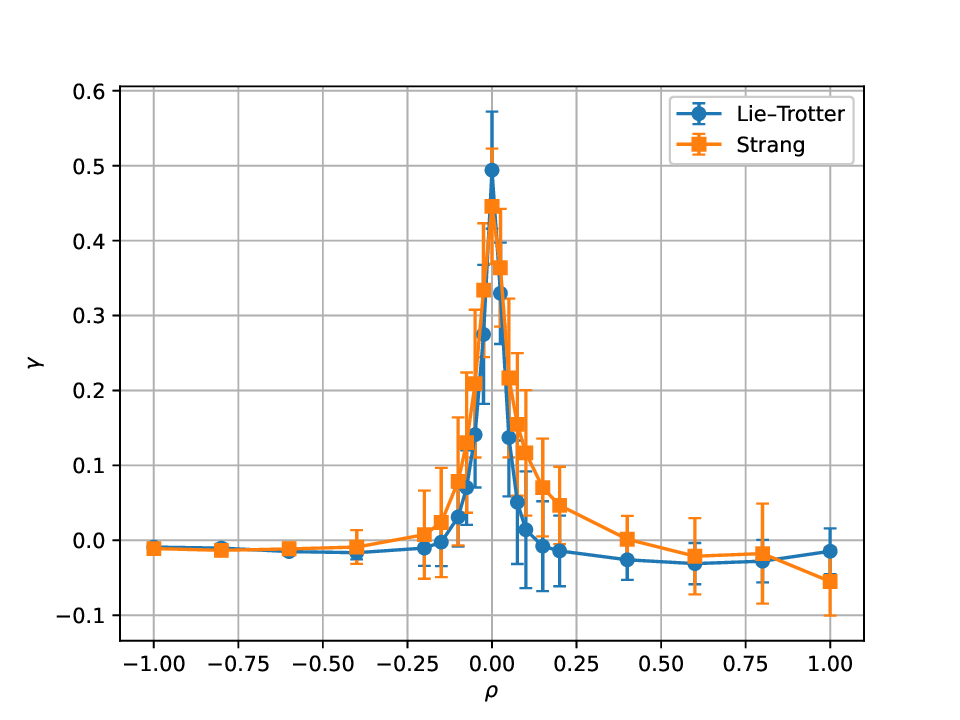}
\end{center}
\caption{Strong convergence order $\gamma$ against $\rho$ (Example~\ref{ex:two}).}\label{F:2}
\end{figure}

We finish by noting that, in both Figure \ref{F:1} and \ref{F:2}, we take 20 groups of 25 trajectories to estimate the standard deviation in the error bars. Once the error bar associated with an observed order of convergence includes zero, we may conclude that the method is not converging numerically over the stepsize range \eqref{eq:stepsizes} for this value of $\rho$.

\subsection{An alternative method: varying the split}

The proof of Theorem \ref{thm:main} in Section \ref{sec:proofs} relies on an application of the stochastic product rule to solutions (exact or approximate) of \ref{eq:y1} and \ref{eq:y2}. Since these subsystems share the standard Brownian motion $B_2$ in common, this creates a square variation term that contributes to the overall error bound in the analysis. This term can be eliminated by moving the diffusion term with $B_2$ in \ref{eq:y2} to \eqref{eq:y1}, suggesting an alternative splitting based upon the new decomposed SDE subsystems
\begin{align*}
dY^{[1]}(t) &= f(Y^{[1]}(t-\tau))\,dt 
+ \left[g(Y^{[1]}(t-\tau))+ \rho\sigma Y^{[1]}(t)\right]dB_2(t), \quad t \in (0,T];\\
Y^{[1]}(t)&=Y^{[1]}_{[-\tau,0]}=\psi(t),\quad t\in[-\tau,0],
\end{align*}
and
\begin{align*}
dY^{[2]}(t) &= \mu Y^{[2]}(t)\,dt 
+ \sqrt{1-\rho^2}\,\sigma Y^{[2]}(t)\,dB_1(t) 
, \quad t \in (0,T];\\
Y^{[2]}(0)&=Y^{[2]}_0=1.
\end{align*}
with the construction and composition of associated maps as described according to the Lie-Trotter approach in Section \ref{sec:split}. For this new variant, we have confirmed that an error bound of the form \eqref{eq:mainBound} in the statement of Theorem \ref{thm:main} still holds; the proof is similar to that of Theorem \ref{thm:main} and omitted for brevity. As such, we do not see an immediate theoretical advantage to this variant. However, in practice, we observe in Figures \ref{F:3}-\ref{F:4} that the range of values of $\rho$ over which nonzero order of strong convergence is observed has increased, improving the useful domain of the method.

\begin{figure}
\begin{center}
 \includegraphics[width=0.9\textwidth]{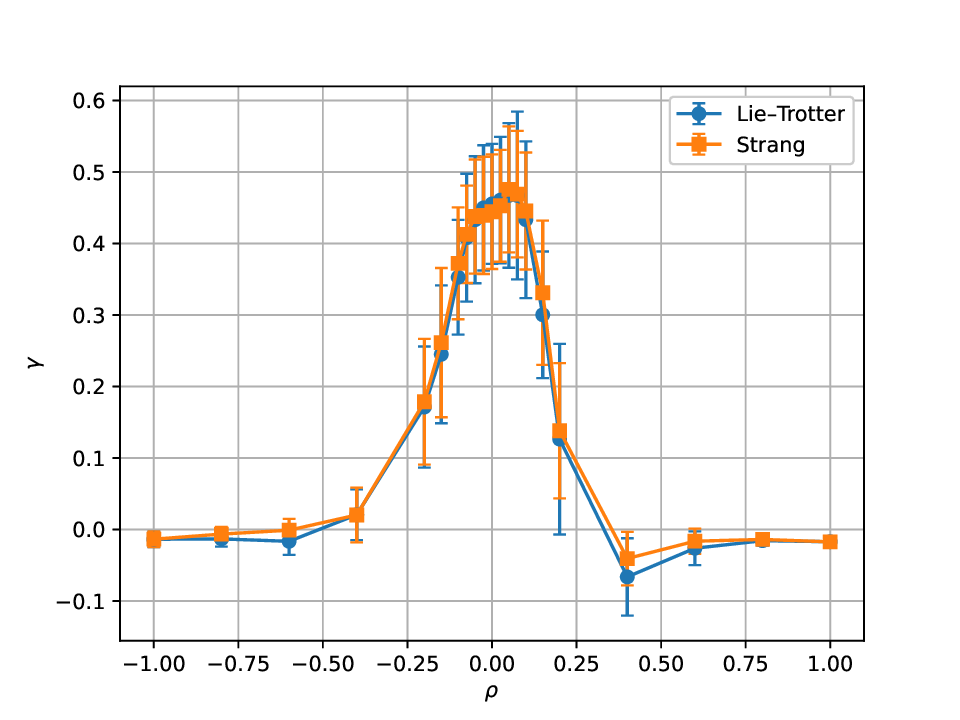}
\end{center}
\caption{Strong convergence for Lie-Trotter variant showing order $\gamma$ against $\rho$ (Example~\ref{ex:one}).}\label{F:3}
\end{figure}

\begin{figure}
\begin{center}
 \includegraphics[width=0.9\textwidth]{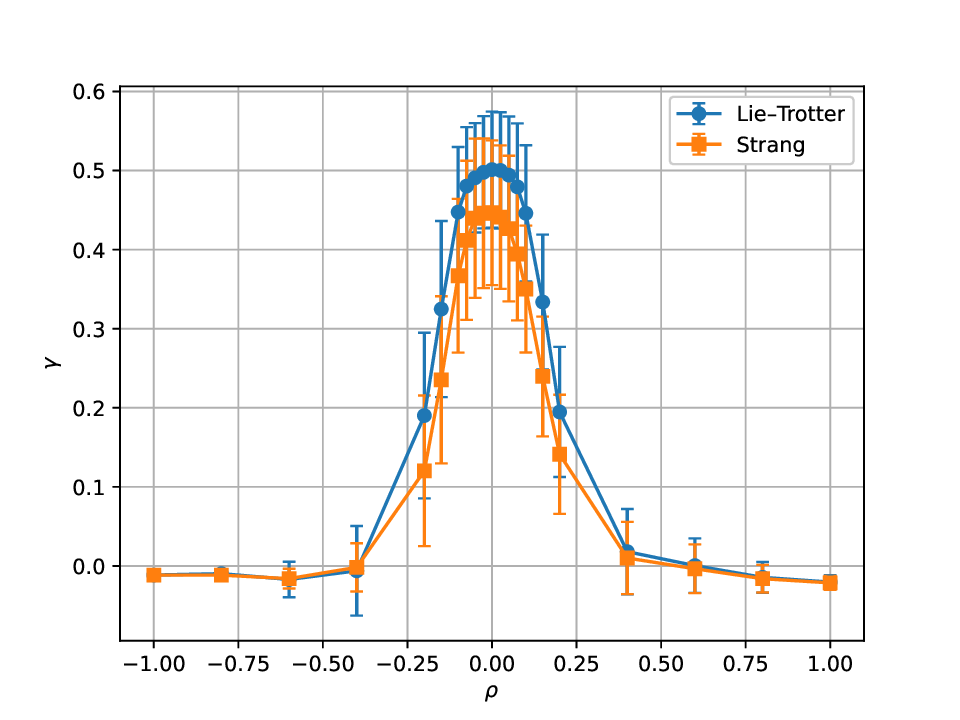}
\end{center}
\caption{Strong convergence for Lie-Trotter variant showing order $\gamma$ against $\rho$ (Example~\ref{ex:two}).}\label{F:4}
\end{figure}

\section{Conclusions and further work}
We have shown that the Lie-Trotter splitting scheme is strongly convergent of order $1/2$ for a class of semilinear SDDEs when the noise perturbations on the delay and non-delay parts are independent. In the case where the SDDE contains correlated noises, we can ensure an upper bound on the global error of the Lie-Trotter scheme, however this bound is independent of the stepsize $\delta t$ and does not guarantee convergence. Numerically we see that the observed convergence order reduces to zero rapidly as $|\rho|$ increases, and similar behaviour is seen for Strang splitting. By selecting a different decomposition of the SDE into subsystems that avoids repeating instances of the same standard Brownian motions across different subsystems, we can improve the range of $\rho$ over which numerical strong convergence is observed, though the form of the obtainable theoretical error bound is the same. 

We conclude that these specific splitting strategies, though intuitively natural, are only suitable for the numerical solution of such SDDEs in the case where the perturbing noises are independent or (at best) very weakly correlated. A variant that ensures the independence of noise perturbations across subsystems is seen to be more numerically effective. 

There are many possible future directions for this work. These include developing a lower bound on the order of strong convergence, and extending the theoretical results in Sections \ref{sec:moments} and \ref{sec:main} to finite-dimensional systems and to splitting schemes beyond Lie-Trotter; identifying further splitting strategies that are numerically effective when $\rho\neq 0$; investigating the dynamic consistency of these schemes, including their ability to capture oscillatory behaviour, and to empirically reproduce stationary distributions.

\section{Proofs}\label{sec:proofs}
\begin{proof}[Proof of Lemma \ref{lem:2.3new}]
Fix any $n=0,\ldots,N-1$. 
For $t\in[t_n,t_{n+1}]$ the solution of \eqref{eq:lieinter} may be written $Y(t)=Z_1(t)Z_2(t)$, where 
\begin{align*}
dZ_1(t)&=\mu Z_1(t)dt+\sqrt{1-\rho^2}\sigma Z_1(t)dB_1(t)+\rho\sigma Z_1(t)dB_2(t);\\
dZ_2(t)&=f(\bar Y(t_n))dt+g(\bar Y(t_n))dB_2(t),
\end{align*}
with initial values $Z_1(t)=1$, $Z_2(t_n)=Y(t_n)$, and using the notation $\bar Y_n:=Y(t_{n-\lfloor\tau/\delta t\rfloor})$. An application of the stochastic product rule followed by the transformation $V(y)=|y|^p$ leads to the semimartingale representation  
\begin{multline*}
|Y(t)|^p=|Y(t_n)|^p
\\+p\int_{t_n}^{t}\left[\Phi_{t_n}(s)f(\bar Y_n)|Y(s)|^{p-1}+\rho\sigma\Phi_{t_n}(s)g(\bar Y_n)|Y(s)|^{p-1}+\mu|Y(s)|^p\right]ds
\\+\frac{p(p-1)}{2}\int_{t_n}^{t}\left[\sigma^2|Y(s)|^p+\Phi_{t_n}(s)^2g(\bar Y_n)^2|Y(s)|^{p-2}+2\rho\sigma\Phi_{t_n}(s)g(\bar Y_n)|Y(s)|^{p-1}\right]ds
\\+p\int_{t_n}^{t}\sqrt{1-\rho^2}\sigma|Y(s)|^pdB_1(s)+p\int_{t_n}^{t}\left[\rho\sigma|Y(s)|^p+\Phi_{t_n}(s)g(\bar Y_n)|Y(s)|^{p-1}\right]dB_2(s),
\end{multline*} 
for $t\in[t_n,t_{n+1}]$. For brevity, we outline the proof under the assumption that both $f$ and $g$ are globally Lipschitz continuous and $f(0)=g(0)=0$, in which case there exists $C>0$ such that 
\begin{equation}\label{eq:FGtightbounds}
|f(y)|\vee |g(y)|\leq C|y|\text{ for all }y\in\mathbb{R}.
\end{equation} 
A proof that requires only the linear growth bound \eqref{eq:linear-growth} on $f$ and $g$ is more complex, and may be achieved using the alternative transformation $V(y)=(1+|y|^2)^{p/2}$ (see for example Theorem 5.4.1 in Mao~\cite{Mao2008}).

Iterate both sides back to the initial data and make use of \eqref{eq:FGtightbounds} and the fact that $|Y(0)|\leq \|\psi\|$ to get 
\begin{multline*}
|Y(t)|^p\leq \|\psi\|^p+C\int_{0}^{t}\left[|Y(r)|^p+|Y(r)|^{p-1}\sum_{j=0}^{N-1}\left(|\bar Y_j|\Phi_{t_j}(r)\mathcal{I}_{\{r\in[t_j,t_{j+1}]\}}\right)\right.
\\ \left.\qquad\qquad\qquad\qquad\qquad\qquad+|Y(r)|^{p-2}\sum_{j=0}^{N-1}\left(|\bar Y_j|^2\Phi_{t_j}(r)^2\mathcal{I}_{\{r\in[t_j,t_{j+1}]\}}\right)\right]dr
\\+C\int_{0}^{t}|Y(r)|^pdB_1(r)+C\int_{0}^{t}|Y(r)|^pdB_2(r)
\\+C\int_{0}^t\sum_{j=0}^{N-1}\left(|\bar Y_j|\Phi_{t_j}(r)\mathcal{I}_{\{r\in[t_j,t_{j+1}]\}}\right)|Y(r)|^{p-1}dB_2(r),
\end{multline*}
where $C>0$ is a generic constant and $\mathcal{I}_A$ is the indicator of the set $A$. Take the supremum over the time set on both sides
\begin{multline*}
\sup_{s\in[-\tau,t]}|Y(s)|^p\leq \|\psi\|^p
\\+C\int_{0}^{t}\left[\sup_{u\in[-\tau,s]}|Y(u)|^p\left(1+\sum_{j=0}^{N-1}\left(\Phi_{t_j}(s)+\Phi_{t_j}(s)^2\right)\mathcal{I}_{\{s\in[t_j,t_{j+1}]\}}\right)\right]ds
\\+C\sup_{s\in[0,t]}\int_{0}^{s}|Y(r)|^pdB_1(r)+C\sup_{s\in[0,t]}\int_{0}^{s}|Y(r)|^pdB_2(r)
\\+C\sup_{s\in[0,t]}\int_{0}^s\sum_{j=0}^{N-1}\left(|\bar Y_j|\Phi_{t_j}(r)\mathcal{I}_{\{r\in[t_j,t_{j+1}]\}}\right)|Y(r)|^{p-1}dB_2(r)
\end{multline*}
We may now take expectations on both sides, invoke the Burkholder-Davis-Gundy inequality to control the diffusion terms, and apply the bounds \eqref{eq:Phi-bound-v} in Lemma \ref{lem:2.2} to control the contributions from $\Phi_{t_n}(s)$. The proof may then be completed via an application of the continuous form of the Gronwall inequality found for example in \cite[Theorem 1.8.1]{Mao2008}.
\end{proof}

\begin{proof}[Proof of Lemma \ref{lem:4.4}]
On an individual step, $Y$ can be written as:
\begin{equation*}
    Y(u)
= \Phi_{t_n}(u)\left(
    Y(t_n)
    + \int_{t_n}^{u} f\left(\bar Y_n\right)\,ds
    + \int_{t_n}^{u} g\left(\bar Y_n\right)\,dB_2(s)
  \right),
\quad u\in [t_n,t_{n+1}].
\end{equation*}
Using the decomposition \(
\Phi_s(t) = \Phi_s(u)\,\Phi_u(t)
\), we get
\[
\begin{aligned}
Y(t)&-Y(s)\\
=& \Phi_{t_n}(s)\left(\Phi_s(t)-1\right)Y(t_n)+\Phi_{t_n}(s)\left(\Phi_s(t)-1\right)\left(\int_{t_n}^{s} f\left(\bar Y_n\right)\,dr+\int_{t_n}^{s} g\left(\bar Y_n\right)\,dB_2(r)\right) \\
&+\Phi_{t_n}(t)\int_{s}^{t} f\left(\bar Y_n\right)\,dr
+\Phi_{t_n}(t)\int_{s}^{t} g\left(\bar Y_n\right)\,dB_2(r).
\end{aligned}
\]

By the elementary inequality $\left|a_1+a_2+\cdots+a_5\right|^{p}
\le 5^{\,p-1}\left(\,|a_1|^{p}+|a_2|^{p}+\cdots+|a_5|^{p}\right)$, we can estimate
\begin{align}
\mathbb{E}|Y(t)&-Y(s)|^p\nonumber
\\
\le & 5^{p-1}\underbrace{\mathbb{E}\left|\Phi_{t_n}(s)\left(\Phi_s(t)-1\right)Y(t_n)\right|^p}_{=:\tilde A}+5^{p-1}\underbrace{\mathbb{E}\left|\Phi_{t_n}(s)\left(\Phi_s(t)-1\right)\int_{t_n}^{s} f\left(\bar Y_n\right)\,dr\right|^p}_{=:\tilde B}\nonumber\\
&+5^{p-1}\underbrace{\mathbb{E}\left|\Phi_{t_n}(s)\left(\Phi_s(t)-1\right)\int_{t_n}^{s}g\left(\bar Y_n\right)\,dB_2(r)\right|^p}_{=:\tilde C}\nonumber \\
&+ 5^{p-1}\underbrace{\mathbb{E}\left|\Phi_{t_n}(t)\int_{s}^{t} f\left(\bar Y_n\right)\,dr\right|^p}_{=:\tilde D}
+ 5^{p-1}\underbrace{\mathbb{E}\left|\Phi_{t_n}(t)\int_{s}^{t} g\left(\bar Y_n\right)\,dB_2(r)\right|^p}_{=:\tilde E}.\label{eq:boundLemStat}
\end{align}

We will estimate each of the terms $\tilde A$-$\tilde E$ separately. In what follows, let $C>0$ denote a generic positive constant that may change over the course of the proof, but which is always independent of the difference $t-s$.

\underline{Term $\tilde A$:} Observe that since $t_n\leq s\leq t$, $\Phi_{t_n}(s)$, $\Phi_{s}(t)$, and $Y(t_n)$ are pairwise mutually independent. Therefore, by \eqref{eq:Phi1bound} in Lemma \ref{lem:2.1} with $U=Y(t_n)$, \eqref{eq:Phi-bound-v} in Lemma \ref{lem:2.2} with $V=1$, and Lemma \ref{lem:2.3new}, there exists a constant $\bar C>0$ such that 
\begin{align*}
\tilde A=\mathbb{E}\left|\Phi_{t_n}(s)\left(\Phi_s(t)-1\right)Y(t_n)\right|^p
&\leq
\mathbb{E}\left|\Phi_{t_n}(s)\right|^{p}
\mathbb{E}\left|(\Phi_s(t)-1)Y(t_n)\right|^p\\
&\leq C\,(t-s)^{p/2}.
\end{align*}

\underline{Term $\tilde B$:} Recall that $\bar Y_n=Y(t_{n-\lfloor\tau/\delta t\rfloor})$ and $\delta t<\tau$. Then $\Phi_{t_n}(s)$, $\Phi_s(t)$, and $\int_{t_n}^{s} f\left(\bar Y(r)\right)\,dr$ are pairwise mutually independent, and we obtain in a similar way that 
\[
\tilde B=\mathbb{E}\left|\Phi_{t_n}(s)\left(\Phi_s(t)-1\right)\int_{t_n}^{s} f\left(\bar Y_n\right)\,dr\right|^p
    \le C\,(t-s)^{p/2}\mathbb{E}\left|\int_{t_n}^{s} f\left(\bar Y_n\right)\,dr\right|^{p}.
\]
Since $p\geq 1$, the function $|\cdot|^p$ is convex. Apply Jensen's inequality to the last factor on the RHS and bring the expectation inside the integral to get
\begin{equation*}
\mathbb{E}\left|\int_{t_n}^{s} f\left(\bar Y_n\right)\,dr\right|^{p}
\le (s-t_n)^{p-1}\int_{t_n}^{s}\mathbb{E}\left|f\left(\bar Y_n\right)\right|^{p} dr.
\end{equation*}
Applying \eqref{eq:linear-growth} in Assumption \ref{assum:LGcond}, we get
\begin{equation*}
\begin{aligned}
   \tilde B&\le C\,(t-s)^{p/2}\int_{t_n}^{s}\mathbb{E} \left|f\left(\bar Y_n\right)\right|^{p}\,dr\\
    &\le C(t-s)^{p/2}(s-t_n)^{p-1}\int_{t_n}^{s}\mathbb{E}\left(1+\bar{Y}_n^2\right)^{p/2}dr\\
    &\leq C(t-s)^{p/2}.
\end{aligned}
\end{equation*}
At the last step we invoke the bound \eqref{eq:lemma23new} in the statement of Lemma \ref{lem:2.3new}, and the fact that $s-t_n\leq T$.

\underline{Term $\tilde C$:} A similar argument, making additional use of Burkholder-Davis-Gundy inequality (see, for example~\cite[Theorem 1.7.1]{Mao2008}) to bound the absolute moment of the It\^o integral, gives
\[
\begin{aligned}
    \tilde C&=\mathbb{E}\left|\Phi_{t_n}(s)\left(\Phi_s(t)-1\right)\int_{t_n}^{s} g(\bar Y_n)\,dB_2(r)\right|^p
    \\&\le C(t-s)^{p/2}\mathbb{E}\left[\left|\Phi_{t_n}(s)\right|^p\left|\int_{t_n}^{s} g(\bar Y_n)\,dB_2(r)\right|^{p}\right]
    \\& \leq C(t-s)^{p/2}\left(\mathbb{E}\left[\left|\Phi_{t_n}(s)\right|^{2p}\right]\right)^{1/2}\left(\mathbb{E}\left[\left|\int_{t_n}^{s} g(\bar Y_n)\,dB_2(r)\right|^{2p}\right]\right)^{1/2}
    \\ &\leq C(t-s)^{p/2}(s-t_n)^{(p-1)/2}\left(\int_{t_n}^{s}\mathbb{E}\left|g\left(\bar Y_n\right)\right|^{2p}dr\right)^{1/2}
    \\ &\leq C(t-s)^{p/2}(s-t_n)^{(p-1)/2}\left(\int_{t_n}^{s}\mathbb{E}\left(1+\bar{Y}_n^2\right)^{p}dr\right)^{1/2}\\
    & \leq C(t-s)^{p/2}.
\end{aligned}
\]
Note the use of the Cauchy-Schwarz inequality at the second estimation.

\underline{Term $\tilde D$:} Again, \( \Phi_{t_n}(t) \) is independent of \( \int_{s}^{t} f(\bar Y(r))\,dr \), and we may apply Lemma~\ref{lem:2.2} and H\"older's inequality to show that 
\[
\begin{aligned}
\tilde D=\mathbb{E}\left|\Phi_{t_n}(t)\left(\int_{s}^{t} f(\bar Y_n)\,dr\right)\right|^{p}
&= \mathbb{E}|\Phi_{t_n}(t)|^{p}\, \mathbb{E}\left|\int_{s}^{t} f(\bar Y_n)\,dr\right|^{p} \\
&\le C\, \mathbb{E}\left|\left(\int_{s}^{t} \left|f\left(\bar{Y}_n\right)\right|^{p}\,dr\right)^{1/p}(t-s)^{(p-1)/p}\right|^{p}\\
&\le C(t-s)^{p-1}\mathbb{E}\int_{s}^{t}\left|f\left(\bar Y_n\right)\right|^p dr.
\end{aligned}
\]
Next, bringing the expectation inside the integral and applying \eqref{eq:linear-growth}, we get
\begin{align*}
    \tilde D&\le C(t-s)^{p-1}\int_{s}^{t}\mathbb{E}\left(1+\bar{Y}_n^2\right)^{p/2} dr\\
    &\leq C(t-s)^{p}\leq  C(t-s)^{p/2},
\end{align*}
since $t-s<1$.

\underline{Term $\tilde E$:} By Part 2 of Lemma \ref{lem:2.2} we have for $1/p=1/r+1/2$ with $r>1$,
\begin{equation}\label{eq:EtildeBound}
\tilde E=\mathbb{E}\left|\Phi_{t_n}(t)\int_{s}^{t} g(\bar Y_n)\,dB_2(r)\right|^p\leq K_r^{p/r}\left(\mathbb{E}\left|\int_{s}^{t} g(\bar Y_n)\,dB_2(r)\right|^q\right)^{p/q}.
\end{equation}
An application of the Burkholder-Davis-Gundy inequality yields
\[
\mathbb{E}\left|\int_{s}^{t} g(\bar Y_n)\,dB_2(r)\right|^q\leq C(t-s)^{q/2-1}\int_{s}^{t}\mathbb{E}|g(\bar Y_n)|^qdr\leq C(t-s)^{q/2}.
\]

Apply the linear growth bound \eqref{eq:linear-growth} to $g$, and the moment bound \eqref{eq:lemma23new} in Lemma \ref{lem:2.3new}. Then \eqref{eq:Phi-bound-v} in Part 2 of Lemma~\ref{lem:2.2} yields
\[
\tilde E\leq C(t-s)^{p/2}.
\]

Substituting the estimates for terms $\tilde A$-$\tilde E$ back into \eqref{eq:boundLemStat} yields the statement of the Lemma.
\end{proof}

\begin{proof}[Proof of Theorem \ref{thm:main}]
Suppose that the interval $[t_n,t_{n+1}]$ lies on the $m$th delay interval $[m\tau,(m+1)\tau]$. 
The exact solution for $t\in[t_n,t_{n+1}]$ is given by
\[
\begin{aligned}
    X(t)
    =& \Phi_{m\tau}(t)\left[ X(m\tau)
  + \int_{m\tau}^{t} \Phi_{m\tau}^{-1}(s) [f(X(s-\tau))-\rho\sigma g\left(X(s-\tau)\right)] ds \right.\\
  &\left.+ \int_{m\tau}^{t} \Phi_{m\tau}^{-1}(s) g(X(s-\tau)) dB_2(s) \right]\\
  =&\Phi_{m\tau}(t_n)\Phi_{t_n}(t)\left[ X(m\tau)
  + \int_{m\tau}^{t_n} \Phi_{m\tau}^{-1}(s) [f(X(s-\tau))-\rho\sigma g\left(X(s-\tau)\right)] ds\right.\\
  &\left.+ \int_{m\tau}^{t_n} \Phi_{m\tau}^{-1}(s) g(X(s-\tau)) dB_2(s) \right]\\
  &+\Phi_{m\tau}(t_n)\Phi_{t_n}(t)\left[ \int_{t_n}^{t} \Phi_{m\tau}^{-1}(s) f(X(s-\tau)) ds
  + \int_{t_n}^{t} \Phi_{m\tau}^{-1}(s) g(X(s-\tau)) dB_2(s) \right]\\
  =&\Phi_{t_n}(t)\left(X(t_n)+\Phi_{m\tau}(t_n)\left[\int_{t_n}^{t} \Phi_{m\tau}^{-1}(s) [f(X(s-\tau))-\rho\sigma g\left(X(s-\tau)\right)] ds\right.\right.\\
  &\left.\left.+ \int_{t_n}^{t} \Phi_{m\tau}^{-1}(s) g(X(s-\tau)) dB_2(s) \right]\right).
\end{aligned}
\]
Now define the (signed) error function:
\[
\begin{aligned}
    E(t)
:=&X(t)-Y(t)\\
=&\Phi_{t_n}(t)\left(X(t_n)+\Phi_{m\tau}(t_n)\left[\int_{t_n}^{t} \Phi_{m\tau}^{-1}(s) [f(X(s-\tau))-\rho\sigma g\left(X(s-\tau)\right)] ds\right.\right.\\
  &\left.\left.+ \int_{t_n}^{t} \Phi_{m\tau}^{-1}(s) g(X(s-\tau)) dB_2(s) \right]\right.
  \\
  &\left.-Y(t_n)-\int_{t_n}^{t}f\left(Y\left(t_{n-\lfloor\tau/\delta t\rfloor}\right)\right)ds-\int_{t_n}^{t}g\left(Y\left(t_{n-\lfloor\tau/\delta t\rfloor}\right)\right)dB_2(s)\right)\\
  =&\Phi_{t_n}(t)\left(E(t_n)+\int_{t_n}^{t}[\Phi_{t_n}^{-1}(s) [f(X(s-\tau))-\rho\sigma g\left(X(s-\tau)\right)]-f\left(Y\left(t_{n-\lfloor\tau/\delta t\rfloor}\right)\right)]ds\right.\\
  &\left.+\int_{t_n}^{t}[\Phi_{t_n}^{-1}(s) g(X(s-\tau))-g\left(Y\left(t_{n-\lfloor\tau/\delta t\rfloor}\right)\right)]dB_2(s)\right)\\
  =&Z_1(t)Z_2(t),
\end{aligned}
\]
where $Z_1$ and $Z_2$ can be written as components of a system of It\^o-type SDEs with
\[
dZ_1(t)=\mu Z_1(t)dt+\sqrt{1-\rho^2}\sigma Z_1(t)dB_1(t)+\rho\sigma Z_1(t)dB_2(t),
\]
and
\begin{multline*}
dZ_2(t)=\left(\frac{f(X(t-\tau))-\rho\sigma g\left(X(t-\tau)\right)}{Z_1(t)}-f\left(Y\left(t_{n-\lfloor\tau/\delta t\rfloor}\right)\right)\right)dt
\\+\left(\frac{g(X(t-\tau))}{Z_1(t)}-g\left(Y\left(t_{n-\lfloor\tau/\delta t\rfloor}\right)\right)\right)dB_2(t),
\end{multline*}
with initial conditions on the interval $[t_n,t_{n+1}]$ given by $Z_1(t_n)=1$ and $Z_2(t_n)=E(t_n)$. Since the exact solution $X(t)$ admits a finite second moment and,
by Lemma~\ref{lem:2.3new}, $Y(t)$ also admits a finite second moment, it follows that
\[
\mathbb{E}\left[E(t_n)^2\right]
= \mathbb{E}\left[|X(t_n)-Y(t_n)|^2\right]
\le 2\,\mathbb{E}\left[X(t_n)^2\right]
   + 2\,\mathbb{E}\left[Y(t_n)^2\right]
< \infty.
\]
We can use the stochastic product rule to also write $E(t)$ as an It\^{o}-type SDE:
\begin{align*}
dE(t) =&Z_1(t)dZ_2(t)+Z_2(t)dZ_1(t)+dZ_1(t)dZ_2(t)
\\=&\left[ f\left(X(t-\tau)\right) - \Phi_{t_n}(t)\,f\left(Y\left(t_{n-\lfloor\tau/\delta t\rfloor}\right)\right)-\rho\sigma\Phi_{t_n}(t)g\left(Y\left(t_{n-\lfloor\tau/\delta t\rfloor}\right) \right)+ \mu E(t) \right] dt\\
&+\sqrt{1-\rho^2}\sigma E(t)  dB_1(t)\\
&+\left[ g\left(X(t-\tau)\right)
        - \Phi_{t_n}(t)\,g\left(Y\left(t_{n-\lfloor\tau/\delta t\rfloor}\right)\right)+\rho\sigma E(t)\right]dB_2(t).
\end{align*}
We may then apply It\^o's formula to compute the SDE governing $E(t)^2$, 
write in integral form for any $t \in [t_n, t_{n+1}]$, and take expectations on both sides, 
%
making use of the martingale property to eliminate the It\^o integrals:
\begin{align}
    \mathbb{E}[E(t)^2]
    =&\mathbb{E}\left[E(t_n)^2\right]+ (2\mu+\sigma^2)\int_{t_n}^{t} \mathbb{E}\left[E(s)^2\right]ds\nonumber\\
    &+2\int_{t_n}^{t}\mathbb{E}\left[\underbrace{\left(f(X(s-\tau))-\Phi_{t_n}(s)f\left(Y\left(t_{n-\lfloor\tau/\delta t\rfloor}\right)\right)\right)E(s)}_{=:\bar A}\right]ds\nonumber\\
    &+\int_{t_n}^{t}\mathbb{E}\left[\underbrace{\left(g(X(s-\tau))-\Phi_{t_n}(s) g\left(Y\left(t_{n-\lfloor\tau/\delta t\rfloor}\right)\right)\right)^2}_{=:\bar B} \right]ds\nonumber\\
   &-2\rho\sigma\int_{t_n}^{t}\mathbb{E}\left[\underbrace{\Phi_{t_n}(s)g\left(Y\left(t_{n-\lfloor\tau/\delta t\rfloor}\right)\right)E(s)}_{=:\bar C}\right]ds.\label{eq:sdeE2}\end{align}
We can rewrite the term $\bar A$ as follows:
\begin{align*}
\bar A=&\left(f(X(s-\tau))-\Phi_{t_n}(s)f\left(Y\left(t_{n-\lfloor\tau/\delta t\rfloor}\right)\right)\right)E(s)\\
=& \underbrace{E(s)\left(f(X(s-\tau))-f(Y(s-\tau))\right)}_{=:\bar A_1}+ \underbrace{E(s)\left(f(Y(s-\tau))-\Phi_{t_n}(s)\,f\left(Y(s-\tau)\right)\right)}_{=:\bar A_2}\\
&+\underbrace{E(s)\Phi_{t_n}(s)\left(f(Y(s-\tau))-\,f\left(Y\left(t_{n-\lfloor\tau/\delta t\rfloor}\right)\right)\right)}_{=:\bar A_3}.  
\end{align*}
We can rewrite the term $\bar B$ as follows:
\begin{align*}
\bar B=&\left[g(X(s-\tau))-\Phi_{t_n}(s) g\left(Y\left(t_{n-\lfloor\tau/\delta t\rfloor}\right)\right) \right]^2\\
=& \left[\underbrace{\left(g(X(s-\tau))-g(Y(s-\tau))\right)}_{=:\bar B_1}+ \underbrace{\left(g(Y(s-\tau))-\Phi_{t_n}(s)\,g\left(Y(s-\tau)\right)\right)}_{=:\bar B_2}\right.\\
&\left.+\underbrace{\Phi_{t_n}(s)\left(g(Y(s-\tau))-\,g\left(Y\left(t_{n-\lfloor\tau/\delta t\rfloor}\right)\right)\right)}_{=:\bar B_3}\right]^2.  
\end{align*}
We bound each of the terms $\bar A_{i},\,\bar B_i$, $i=1,2,3$ separately. As in the proof of Lemma \ref{lem:4.4}, $C$ will represent a generic constant that may change in value over the course of the proof, but which is always independent of $\delta t$.

\underline{Term $\bar A_1$:} By \eqref{eq:4.5} in Assumption~\ref{ass:delayOSL}, there exists a constant $C>0$ such that
\[
\bar{A}_1= E(s)\left(f(X(s-\tau)) - f(Y(s-\tau))\right)\leq C E(s)^{2},\quad \text{a.s.}
\]
This bound applies pathwise and therefore
\begin{equation}\label{eq:A1bound}
\mathbb{E}\left[\bar{A}_1\right]\leq C\mathbb{E}\left[E(s)^{2}\right].
\end{equation}

\underline{Term $\bar A_2$:} Apply the elementary inequality $ab\leq a^{2}/2 + b^{2}/2$ to get
\begin{align*}
  \bar{A}_2 &= E(s) f(Y(s-\tau))(1-\Phi_{t_n}(s))\le \frac{1}{2}  E(s)^2 + \frac{1}{2} f(Y(s-\tau))^2 (1-\Phi_{t_n}(s))^2,\quad a.s.  
\end{align*}
Take expectations on both sides:
\begin{align}
    \mathbb{E}\left[\bar A_2\right]&\leq\frac{1}{2}\mathbb{E}\left[E(s)^2\right]+\frac{1}{2}\mathbb{E}\left[f(Y(s-\tau))^2 (1-\Phi_{t_n}(s))^2\right]\nonumber\\
    &\leq\frac{1}{2}\mathbb{E}\left[E(s)^2\right]+\frac{1}{2}\mathbb{E}\left[f(Y(s-\tau))^2\right]\mathbb{E}\left[(1-\Phi_{t_n}(s))^2\right]\nonumber\\
    &\leq \frac{1}{2}\mathbb{E}\left[E(s)^2\right]+C\delta t.\label{eq:A2bound}
\end{align}
At the second step we use the fact that since $s-\tau<t_n$, the terms $f\left(Y(s-\tau)\right)$ and $1-\Phi_{t_n}(s)$ are mutually independent. At the third step we apply the moment bounds in Lemmas~\ref{lem:2.1} and \ref{lem:2.3new}.

\underline{Term $\bar A_3$:} 
Apply the same elementary inequality to get
\[
\bar{A}_3\le \frac{1}{2}  E(s)^2 + \frac{1}{2} \Phi_{t_n}(s)^2\left(f(Y(s-\tau))-\,f\left(Y\left(t_{n-\lfloor\tau/\delta t\rfloor}\right)\right)\right)^2\quad a.s.
\]
By the independence of $\Phi_{t_n}(s)^{2}$ and $\left(f(Y(s-\tau))-\,f\left(Y\left(t_{n-\lfloor\tau/\delta t\rfloor}\right)\right)\right)^2$, and taking expectations on both sides,
\begin{align}
\mathbb{E}\left[\bar{A}_3\right]&\leq \frac{1}{2}  \mathbb{E}\left[E(s)^2\right]
+ \frac{1}{2} \mathbb{E}\left[\Phi_{t_n}(s)^2\right]\mathbb{E}\left[\left(f(Y(s-\tau))-f\left(Y\left(t_{n-\lfloor\tau/\delta t\rfloor}\right)\right)\right)^2\right]\nonumber\\
&\leq  \frac{1}{2}  \mathbb{E}\left[E(s)^2\right]+\frac{1}{2}C\mathbb{E}\left[\left(f(Y(s-\tau))-f\left(Y\left(t_{n-\lfloor\tau/\delta t\rfloor}\right)\right)\right)^2\right]\nonumber\\
&\leq  \frac{1}{2}  \mathbb{E}\left[E(s)^2\right]+\frac{1}{2}C\mathbb{E}\left[\left(Y(s-\tau)-Y\left(t_{n-\lfloor\tau/\delta t\rfloor}\right)\right)^2\right]\nonumber\\
&\leq \frac{1}{2}  \mathbb{E}\left[E(s)^2\right]+\frac{1}{2}C\delta t, \label{eq:A3bound}
\end{align}
where we use Lemma~\ref{lem:2.2} at the second step, \eqref{eq:linear-growth} in Assumption~\ref{assum:LGcond} at the third step, and Lemma~\ref{lem:4.4} at the last step.

By combining the estimates \eqref{eq:A1bound}, \eqref{eq:A2bound}, and \eqref{eq:A3bound}, we arrive at the overall estimate
\begin{equation}\label{eq:barAbound}
\mathbb{E}\left[\bar A\right]\leq C(\mathbb{E}\left[E(s)^2\right]+\delta t).    
\end{equation}

Next, we have $(\bar{B}_1+\bar{B}_2+\bar{B}_3)^2\leq 3\bar{B}_1^2+3\bar{B}_2^2+3\bar{B}_3^2$. We estimate term-by-term.

\underline{Term $\bar B_1^2$:} By \eqref{eq:gLipschitz} in Assumption~\ref{ass:delayOSL}, there exists a constant $C>0$ such that
\begin{equation}\label{eq:bare2}
\mathbb{E}\left[\bar{B}_1^2\right] \le C\mathbb{E}\left[E(s-\tau)^{2}\right].
\end{equation}

\underline{Term $\bar B_2^2$:} By the independence of $(1-\Phi_{t_n}(s))^{2}$ and $g\left(Y(s-\tau)\right)^2$, Lemma~\ref{lem:2.1} and Lemma~\ref{lem:2.2}, there exists a constant $C>0$ such that
\begin{equation}\label{eq:barf2}
 \mathbb{E}\left[\bar{B}_2^2\right] =\mathbb{E}\left[g(Y(s-\tau))^2(1-\Phi_{t_n}(s))^2\right]\leq C\delta t .
\end{equation}

\underline{Term $\bar B_3^2$:} 
By the independence of $\Phi_{t_n}(s)^{2}$ and $\left(g(Y(s-\tau))-\,g\left(Y\left(t_{n-\lfloor\tau/\delta t\rfloor}\right)\right)\right)^2$, applying Lemma~\ref{lem:2.2}, the bound \eqref{eq:gLipschitz} in Assumption \ref{ass:delayOSL}, and Lemma~\ref{lem:4.4}, there exists a constant $C>0$ such that
\begin{align}
 \mathbb{E}\left[\bar{B}_3^2\right]=&\mathbb{E}\left[\Phi_{t_n}(s)^2\left(g(Y(s-\tau)\right)-g\left(Y\left(t_{n-\lfloor\tau/\delta t\rfloor}\right)\right)^2\right]\nonumber\\
 =&\mathbb{E}\left[\Phi_{t_n}(s)^2\right]\mathbb{E}\left[\left(g(Y(s-\tau)\right)-g\left(Y\left(t_{n-\lfloor\tau/\delta t\rfloor}\right)\right)^2\right]\nonumber\\
 \le& C\delta t.\label{eq:barg2}
\end{align}
By combining the estimates \eqref{eq:bare2}, \eqref{eq:barf2}, and \eqref{eq:barg2}, we arrive at the overall estimate
\begin{equation}\label{eq:barBbound}
\mathbb{E}\left[\bar B\right]\leq C\left(\mathbb{E}\left[E(s-\tau)^2\right]+\delta t\right),  
\end{equation}
for some $C>0$.

\underline{Term $\bar C$:}

By the independence of $\Phi_{t_n}(s)^{2}$ and $g\left(Y\left(t_{n-\lfloor\tau/\delta t\rfloor}\right)\right)^2$, as well as the bounds given in  Lemmas~\ref{lem:2.2} and~\ref{lem:2.3new}, there exists a constant $C_{12}>0$ such that
\begin{equation}\label{eq:bard}
    -2\rho\sigma\mathbb{E}\left[\bar{C}\right] \le|\rho|C\left(\mathbb{E}\left[E(s)^{2}\right] + 1\right).
\end{equation}
Note that we have kept the linear dependence on $|\rho|$ explicit in this bound, rather than absorbing it into the constant $C$.

Substituting the bounds \eqref{eq:barAbound}, \eqref{eq:barBbound}, and \eqref{eq:bard}, back into \eqref{eq:sdeE2} and subtracting $\mathbb{E}\left[E(t_n)^2\right]$ from both sides gives
\begin{align}
\mathbb{E}&\left[E(t)^2\right] - \mathbb{E}\left[E(t_n)^2\right]\nonumber\\
&\leq(2\mu+\sigma^2)\int_{t_n}^{t} \mathbb{E}\left[E(s)^2\right] ds+C\int_{t_n}^{t}\left(\mathbb{E}\left[E(s)^2\right]+\mathbb{E}\left[E(s-\tau)^2\right]+\delta t+C|\rho|\right)ds\nonumber\\
&\leq C\int_{t_n}^{t} \mathbb{E}\left[E(s)^2\right] ds+C\int_{t_n}^{t}\mathbb{E}\left[E(s-\tau)^2\right]ds+C\delta t^2+C|\rho|\delta t.\label{eq:preGronwall}
\end{align}
Let $\bar{N} = \max\{n : t_n \leq t\}$ be the index of the meshpoint immediately preceding the time $t$ in \eqref{eq:preGronwall}. Now set $t = t_{\bar{N}}$ and sum both sides over $n = 0, \ldots, \bar{N}$. Finally, since \eqref{eq:preGronwall} 
also holds if we replace $t_n$ with $t_{\bar{N}}$ we can add it again to both sides 
yielding the inequality
\begin{equation*}
\mathbb{E}\left[E(t)^2\right]
\le C\int_{0}^{t} \mathbb{E}\left[E(s)^2\right] \, ds+C\int_{0}^{t} \mathbb{E}\left[E(s-\tau)^2\right] \, ds
+ CT\delta t +C|\rho|T.
\end{equation*}
We have used the fact that $E(0) = 0$. Since this inequality holds for all 
$t \in [0,T]$, an application of the delay form of the Gronwall inequality (see Lemma ~\ref{le:delaygrownwall}) yields the statement of the Theorem.
\end{proof}

\medskip
\noindent {\bf Acknowledgement:}

This article is based upon work from COST Action
CA24104 Stochastic Differential Equations: Computation, Inference, Applications (STOCHASTICA), supported by COST (European Cooperation in Science and Technology). Website: \url{www.cost.eu}. The second author is supported by the China Scholarship Council (CSC).

The authors wish to thank Prof. Evelyn Buckwar, Johannes Kepler University, for valuable discussion and guidance at the outset of this project while hosting a visit by the first author in January 2023, and in the context of a Royal Society of Edinburgh Saltire Research Facilitation Network (Ref. 1832) over 2022-24.  

We also wish to thank the anonymous referee for their careful reading of the manuscript and suggestions, which have considerably improved the final version.

\bibliographystyle{plain}

\end{document}